\theoremstyle{plain}
\newtheorem{thm}{Theorem}[section]
\newtheorem{lem}[thm]{Lemma}
\newtheorem{prop}[thm]{Proposition}
\theoremstyle{definition}
\newtheorem{defn}[thm]{Definition}
\newtheorem{remark}[thm]{Remark}
\newtheorem{example}[thm]{Example}
\newcommand{\KRStratum}[1]{\mathcal{A}_{I,#1}} 
\newcommand{\EOStratum}[1]{\mathrm{EO}_{#1}} 
\newcommand{\FinalElts}[1]{W _{\mathrm{#1,final}}} 
\newcommand{\FF}[1]{\mathbb{F}_{#1}} 
\newcommand{\Sp}[1]{\mathrm{Sp}_{#1}} 
\newcommand{\GSp}[1]{\mathrm{GSp}_{#1}} 
\newcommand{\SL}[1]{\mathrm{SL}_{#1}} 
\newcommand{\SU}[1]{\mathrm{SU}_{#1}} 
\newcommand{\relpos}[2]{\mathrm{inv}(#1,#2)} 
\newcommand{\Bor}{\mathcal{B}} 
\newcommand{\DL}[4]{X'_{#1} (#3,#4)} 
\newcommand{\DLB}[3]{X_{#1} (#2,#3)} 
\newcommand{\CoarseDL}[4]{X_{#1,#2} (#3,#4)} 
\newcommand{\HDR}[1]{H^{#1}_{\mathrm{DR}}} 
\newcommand{\scrF}{\EuScript{F}}
\newcommand\gfrac[2]{\genfrac{}{}{0pt}{}{#1}{#2}}
\begin{document}

\title[Ekedahl-Oort strata and Kottwitz-Rapoport strata]
{Ekedahl-Oort strata\\and Kottwitz-Rapoport strata}
\author{Ulrich G\"ortz}
\address[G\"ortz]{
Mathematisches Institut\\
Beringstr.~1\\
53115 Bonn\\
Germany}
\email{ugoertz@math.uni-bonn.de}
\thanks{G\"{o}rtz was partially supported by a Heisenberg grant and by the
SFB/TR 45 ``Periods, Moduli Spaces and Arithmetic of Algebraic Varieties''
of the DFG (German Research Foundation)}
\author{Maarten Hoeve}
\address[Hoeve]{
Korteweg-de Vries Instituut\\
Universiteit van Amsterdam \\
Plantage Muidergracht 24\\
1018 TV Amsterdam \\
The Netherlands}
\email{M.C.Hoeve@uva.nl}

\begin{abstract}
We study the moduli space $\mathcal A_g$ of $g$-dimensional principally
polarized abelian varieties in positive characteristic, and its variant
$\mathcal A_I$ with Iwahori level structure. Both supersingular
Ekedahl-Oort strata and supersingular Kottwitz-Rapoport strata are
isomorphic to disjoint unions of Deligne-Lusztig varieties (see
\cite{Hoeve2008} and \cite{GoertzYu2008}, resp.).  Here we compare these
isomorphisms. We also give an explicit description of Kottwitz-Rapoport
strata contained in the supersingular locus in the general parahoric case.
Finally, we show that every Ekedahl-Oort stratum is isomorphic to a
parahoric Kottwitz-Rapoport stratum.
\end{abstract}

\maketitle

\section{Introduction}

Consider the moduli space $\mathcal{A}_g$ of principally
polarized abelian varieties of dimension $g$ in characteristic $p$,
its variant $\mathcal{A}_I$ with Iwahori level structure at $p$,
and more generally the spaces $\mathcal{A}_J$ with parahoric level
structure of type $J$.
In these moduli spaces the supersingular locus is of great interest.
Whereas we have no hope to describe the whole moduli space
explicitly, for the supersingular locus one can be more optimistic.

One can also look at interesting closed subsets of the
supersingular locus. In the recent papers \cite{GoertzYu2008} and
\cite{GoertzYu2008-2} C.-F.~Yu and the first named author
explicitly described supersingular Kottwitz-Rapoport strata in
terms of Deligne-Lusztig varieties. In \cite{Hoeve2008} the second
named author did the same for supersingular Ekedahl-Oort strata.
See section \ref{sec2} for a summary of these results. Each EO stratum
$\EOStratum{w}$ admits a finite \'etale cover $\mathcal A_{I, w\tau} \rightarrow
\EOStratum{w}$ by a KR stratum (see \cite{ekedahl-vdgeer}, \cite{GoertzYu2008-2},
section 9). Our main results are:

\begin{enumerate}
\item The descriptions of supersingular KR-strata and supersingular EO-strata in 
terms of Deligne-Lusztig varieties are naturally compatible with respect to the projection
$\mathcal A_{I, w\tau} \rightarrow \EOStratum{w}$ (theorem \ref{thm:KR_EO_compatible}).

\item A KR stratum is contained in the supersingular locus of $\mathcal A_J$ if and only if 
it is superspecial (theorem \ref{ssp_iff_ssi}) and in that case it is isomorphic to a disjoint 
union of Deligne-Lusztig varieties (theorem \ref{parahoric_ssp_KR_as_DL}). 
This generalizes results in \cite{GoertzYu2008}, \cite{GoertzYu2008-2}.

\item Each EO-stratum is isomorphic to a certain KR-stratum in the parahoric moduli space 
$\mathcal A_J$ with $J$ the type of the canonical filtration (theorem \ref{EO_as_parahoric_KR}).
\end{enumerate}

Let us mention two open questions. First of all, which EO strata
occur in the image $\pi(\mathcal A_x)\subseteq \mathcal A_g$ for
$x \in {\rm Adm}(\mu)$? The answer to this question
could depend on $p$, but we don't expect that.
Ekedahl and van der Geer, who first posed
this question, showed in \cite{ekedahl-vdgeer}, that if $x=w\tau$
for a final element $w$, then the image is just the single EO
stratum corresponding to $w$, while in general, it is a union of
EO strata. Our theorem \ref{EO_as_parahoric_KR} says that for all
$y \in W_J w\tau W_J\cap {\rm Adm}(\mu)$, the image of $\mathcal
A_y$ under $\pi$ is equal to $\EOStratum{w}$.

Secondly, what is the dimension of the supersingular locus in moduli spaces
with a parahoric level structure? For general $g$ we only know this
for $J = \{0\}$, or $=\{g\}$, by the work of Li and Oort
\cite{li-oort}, or as a consequence of the purity of the Newton
stratification shown by de Jong and Oort, and for $J=I$ and $g$ even (see
\cite{GoertzYu2008-2}, also for bounds in the case where $g$ is odd).
We expect that  for $J\ne \{0\}, \{g\}$ the supersingular
locus is usually not equi-dimensional. The lower bound on the dimension
obtained from KR strata in the supersingular locus cannot be sharp in the
general parahoric case.
Note also that the union of supersingular EO strata achieves only
approximately half the dimension of the supersingular locus in $\mathcal
A_g$.

\section{Preliminaries and notation}
\label{sec2}
In this section we recall the most important definitions and results
from  \cite{GoertzYu2008}, \cite{GoertzYu2008-2} and \cite{Hoeve2008}.
Fix a prime $p$ and let $k$ be an algebraic closure of $\FF{p}$.
We only work with schemes over $k$.

\subsection{The symplectic group}
From the perspective of Shimura varieties the algebraic group
underlying the moduli spaces we study is the group $\GSp{2g}$ of
symplectic similitudes. We denote by $W_g$ its finite Weyl group,
generated by the simple reflections $s_1,\dots,s_g$, and by $W^a_g$
its affine Weyl group, generated by $s_0,s_1,\dots,s_g$. For $c
\leq g$ we see $W_c$ as a subgroup of $W_g$ via the natural map
induced by the inclusion of Dynkin diagrams, explicitly given by
$s_{c+1-i} \mapsto s_{g+1-i}$ for $i=1,\dots,c$.

We identify $I = \{ 0, \dots, g \}$ with the set of simple affine
reflections via $i\mapsto s_i$. The subsets of $I$ are the types
of parahoric subgroups of $\GSp{2g}(\mathbb Q_p)$. For $J \subset I$ we
let $W_J$ be the subgroup of $W_g^a$ generated by all $s_i$ with
$i\notin J$. \emph{Warning:} This differs from convention, where
$W_J$ is generated by $s_j$  for $j \in J$.

Let $\FinalElts{g} \subset W_g$ be the set of final elements, i.e., the set
of minimal length representatives for the cosets in $W_g/S_g$, where $S_g$
is the subgroup generated by $s_1, \dots, s_{g-1}$.
See \cite{oort01} and \cite{ekedahl-vdgeer}. The inclusion $W_c
\subset W_g$ maps $\FinalElts{c}$ to $\FinalElts{g}$. 

We write $G'$ for the inner form (over $\mathbb Q_p$) of
the derived group $\Sp{2g}$ of $\GSp{2g}$ that arises as the
automorphism group of a superspecial abelian variety (together
with a principal polarization), see \cite{GoertzYu2008}, Section
6.1. For $c \in \{0, \dots, [g/2] \}$ the subset $ \{ c, g-c
\} \subset I$ gives a parahoric subgroup $P'_{\{ c, g-c \}}$ of
$G'$. We denote the maximal reductive quotient of $P'_{\{ c, g-c \}}$
by $\overline{G}'_{c}$ (see loc.~cit., where this group is denoted by $\overline{G}'_{\{c,
g-c\}}$). This quotient is an algebraic group over $\mathbb
F_p$ which splits over $\mathbb F_{p^2}$. Its Dynkin diagram is
obtained from the extended Dynkin diagram of $\Sp{2g}$ by removing
the vertices $c$, $g-c$. Frobenius acts on the Dynkin diagram by
$i\mapsto g-i$. Denote by $\sigma'$ the Frobenius on $\overline{G}'_{c, k}$
of the non-split form $\overline{G}'_{c}$.

\subsection{Deligne-Lusztig varieties}

Let $G$ be a connected reductive group over $k$, defined over a
finite field $\FF{q}$ of characteristic $p$. Let $\sigma \colon G \to G$ be
its $q$-power Frobenius. Fix a Borel subgroup $B$ and a maximal torus in $B$ defined
over $\FF{q}$. Let $W$ be the Weyl group.

We denote with $\Bor(G) \cong G/B$ the variety of Borel subgroups
in $G$. In $\Bor(G)$ we have \emph{Deligne-Lusztig varieties}
\[
    \DLB{G}{w}{\sigma} (k) = \{ B\in \Bor(G)(k) \, | \,
    \relpos{B}{\sigma{B}} = w \}.
\]
Here $\mathrm{inv}$ is the relative position map. It sends
$(g_1,g_2) \in G/B \times G/B$  to the unique $w \in W$  such that
$g_1^{-1} g_2 \in BwB$.

To describe supersingular Ekedahl-Oort strata we also need \emph{fine
Deligne-Lusztig varieties} for parabolic subgroups of $\Sp{2c}$. For fine
Deligne-Lusztig varieties, rather than fixing only the relative position of
$P_0:=P$ and $\sigma(P)$, one fixes the relative positions of $P_n$ and
$\sigma(P_n)$ for all successive refinements $P_n = (P_{n-1}\cap
\sigma(P_{n-1}))R_u(P_{n-1})$, see \cite{Hoeve2008} section 2.2. To each $w \in
W_{c,\mathrm{final}}$ we attach a \emph{fine Deligne-Lusztig variety}
$\DL{\Sp{2c}}{I}{w}{\sigma}$ in the variety of stabilizers in $\Sp{2c}$ of
$c$-dimensional isotropic subspaces (denoted by $X(c)$ in
\cite{Hoeve2008}).

\subsection{Moduli spaces of abelian varieties}

Our main object of study is the moduli space $\mathcal A_g$ of
principally polarized abelian varieties of dimension $g$ over $k$.
To get a fine moduli space, we consider abelian varieties with a
full level $N$-structure for $N\geq 3$ and coprime to $p$, with respect to
a fixed primitive $N$-th root of unity. However, we
suppress the level structure from the notation, because it plays
only a minor role.

On $\mathcal A_g$ we have the \emph{Ekedahl-Oort stratification}
\[
    \mathcal A_g = \coprod_{w\in \FinalElts{g} } \EOStratum{w}.
\]
We denote
the supersingular locus in $\mathcal A_g$ by $\mathcal S_g$.

\subsection{Moduli spaces with parahoric level structure}
\label{sec2:moduli_spaces}

We also study variants of $\mathcal{A}_g$ with parahoric level
structure. Fix a subset $J \subseteq I$, or in other words the
type of a parahoric subgroup. We get a moduli space $\mathcal A_J$
with parahoric level structure of type $J$ at $p$, which roughly
speaking classifies chains $A_{j_0} \rightarrow A_{j_1}
\rightarrow \cdots \rightarrow A_{j_r}$ of isogenies (of fixed
$p$-power-degree) with $j_\nu\in J$. On each $A_j$ in the chain we have a
polarization $\lambda_j$; if $j=0$ or $j=g$, then this polarization is
principal. We denote by $\mathcal S_J$ the supersingular locus in $\mathcal
A_J$.

The \emph{Kottwitz-Rapoport stratification} of $\mathcal A_J$ is given as
\[
\mathcal A_J = \coprod_{x\in{\rm Adm}_J(\mu)} \mathcal A_{J, x},
\]
where ${\rm Adm}_J(\mu)$ is the image under $\widetilde{W}
\rightarrow W_J \backslash \widetilde{W} / W_J$ of the admissible
set ${\rm Adm}(\mu)$ (the set of all elements less than some
element in the $W$-orbit of $t^\mu$, where $\mu$ is the dominant
minuscule coweight $(1^{(g)}, 0^{(g)})$). For $x \in {\rm
Adm}(\mu)$, we denote by $\overline{x}$ its image in ${\rm
Adm}_J(\mu)$.

We have the natural projection $\pi_{J,I}\colon \mathcal A_I \rightarrow
\mathcal A_J$, and for each $x\in {\rm Adm}(\mu)$,
\begin{equation}\label{inverse_image_piJI}
\pi_{J,I}^{-1}(\mathcal A_{J, \overline{x}}) = \coprod_{v \in W_JxW_J \cap
{\rm Adm}(\mu)} \mathcal A_{I,v}.
\end{equation}
We identify $\mathcal A_g = \mathcal A_{\{0\}}$, and write $\pi$
for the projection $\mathcal A_I \rightarrow A_g$.

\subsection{Supersingular Kottwitz-Rapoport strata}
\label{subsec:ssKR}

We summarize the results of \cite{GoertzYu2008}.  Let $\tau$ be the minimal
element of $\mathrm{Adm} (\mu)$, and let $w$ be in $W_{\{c,g-c\}}$ so that
$\KRStratum{w \tau}$ is $c$-superspecial. Suppose that
$S$ is a connected scheme over $k$ and $y = (A_0 \to \dots \to
A_g,\lambda_0,\lambda_g,\eta)$ is an $S$-valued point of $\KRStratum{w
\tau}$.

Now consider the projection to $\mathcal A_{\{c,g-c\}}$. The point
$x = \pi_{\{c,g-c\},I} (y)$ is in $\pi_{\{c,g-c\},I}
(\KRStratum{\tau})$. Because $\KRStratum{\tau}$ is
$0$-dimensional, there are trivializations $A_c = S\times_k A'_c$
and $A_{g-c} = S \times_k A'_{g-c}$ with $A'_c$ and $A'_{g-c}$
both superspecial abelian varieties over $k$.

Let $\omega _i \subset\HDR{1}(A_i)$ be the Hodge filtration of
$A_i$ and $\omega_i '$ that of $A_i '$. We get flags
\[
0 \subsetneq \alpha (\omega _{j_1-1}) \subsetneq \alpha (\omega
_{j_1-2}) \subsetneq \dots \subsetneq \alpha (\omega _{j_0+1})
\subsetneq \omega_{j_0} / \omega_{j_1} = \mathcal{O}_S \otimes
(\omega_{j_0}' / \omega_{j_1}')
\]
where $(j_0,j_1)$ is equal to $(-c,c),(c,g-c)$ or $(g-c,g+c)$ and
we abusively write $\alpha$ for all the maps induced by $A_i \to
A_{j_0}$. These flags define a point $\phi (y)$ of $\Bor
(\overline{G}' _c)$.

\begin{thm}
\label{thm:KRStrata} For $w$ in $W_{\{c,g-c\}}$ the map $y \mapsto
(\pi_{\{c,g-c\},I}(y),\phi(y))$ is an isomorphism
\[
\KRStratum{w \tau} \stackrel{\sim}{\to} \pi_{\{c,g-c\},I}
(\KRStratum{\tau}) \times \DLB{\overline{G}'_{c}}{w^{-1}}{\sigma '}
.
\]
\end{thm}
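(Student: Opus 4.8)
The plan is to translate the whole situation into (contravariant) Dieudonné theory, so that $\KRStratum{w\tau}$ becomes a moduli space of periodic self-dual chains of filtered modules over $S$ with prescribed relative positions, then to exploit the $0$-dimensionality of $\KRStratum{\tau}$ to rigidify part of the chain and recognize what remains as a Deligne--Lusztig problem for $\overline{G}'_{c}$.

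First I would make the Kottwitz--Rapoport condition explicit: a point of $\mathcal A_I$ is a chain $A_0\to A_1\to\dots\to A_g$ (continued periodically and made self-dual by the polarizations), and lying in the stratum indexed by $v$ means that each Hodge filtration $\omega_i\subset\HDR{1}(A_i)$ sits, relative to the images of its neighbors, in the position encoded by $v$. Since $\tau$ is minimal, over $\KRStratum{\tau}$ every step is forced to be extremal; this forces all the $A_i$ in a chain to be superspecial and makes $\KRStratum{\tau}$ $0$-dimensional. Hence over a connected $S\to\KRStratum{w\tau}$ the point $x=\pi_{\{c,g-c\},I}(y)$ lies over a single point of $\pi_{\{c,g-c\},I}(\KRStratum{\tau})$ --- here one uses $w\in W_{\{c,g-c\}}$, so that $w\tau$ and $\tau$ coincide in $W_{\{c,g-c\}}\backslash\widetilde W/W_{\{c,g-c\}}$ --- and $A_c$, $A_{g-c}$ acquire trivializations $S\times_k A'_c$, $S\times_k A'_{g-c}$ with $A'_c,A'_{g-c}$ superspecial.

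With $A_c$ and $A_{g-c}$ now constant, the only remaining datum in $y$ is the chain of Hodge filtrations of the intermediate $A_i$, which the polarization and the maps $\alpha$ repackage as the three flags displayed before the theorem. I would check that the ambient subquotients of these flags are canonically the standard-representation spaces of $\overline{G}'_{c}$, the reductive quotient of the parahoric $P'_{\{c,g-c\}}$ of the inner form $G'$, so that the variety of such flags is $\Bor(\overline{G}'_{c})$; this yields the morphism $y\mapsto(\pi_{\{c,g-c\},I}(y),\phi(y))$ into $\pi_{\{c,g-c\},I}(\KRStratum{\tau})\times\Bor(\overline{G}'_{c})$, with the second factor independent of $x$ because $G'$ is a fixed $\mathbb Q_p$-group, the automorphism group of a superspecial principally polarized abelian variety. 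So the target is a genuine product.

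The heart of the argument, and the step I expect to be the main obstacle, is to identify the image: $\phi(y)\in\DLB{\overline{G}'_{c}}{w^{-1}}{\sigma'}$ if and only if $y\in\KRStratum{w\tau}$. On one hand, the $p$-power Frobenius and the Verschiebung of the abelian scheme relate $\omega_i$ to its Frobenius twist; on the superspecial objects $A'_c,A'_{g-c}$ this is precisely the descent datum defining the twisted Frobenius $\sigma'$ of $\overline{G}'_{c}$, so $\sigma'(\phi(y))$ is the flag built from the Frobenius twists of the $\omega_i$. On the other hand, translating the relative positions prescribed by $w\tau$ (with $\tau$ minimal, so that it contributes only through $w$) into $\relpos{\phi(y)}{\sigma'(\phi(y))}$ in the finite Weyl group of $\overline{G}'_{c}$ demands a careful bookkeeping of how $w\in W_{\{c,g-c\}}$ acts on the standard apartment, together with the inversion coming from passing between ``$\omega_i$ versus the image of $\omega_{i-1}$'' and ``flag versus its Frobenius twist''; this is what produces $w^{-1}$ rather than $w$. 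Finally I would construct the inverse morphism: given $x$ and a point of $\DLB{\overline{G}'_{c}}{w^{-1}}{\sigma'}$, the flag recovers the chain of Hodge filtrations, hence --- by Dieudonné theory and the theory of displays, using that the ambient objects are the fixed superspecial ones --- the chain of isogenies together with its polarizations and level structure, functorially in $S$; verifying that this is a two-sided inverse of $y\mapsto(\pi_{\{c,g-c\},I}(y),\phi(y))$ completes the proof.
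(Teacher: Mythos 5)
The paper does not actually prove this theorem: its ``proof'' is a citation to Corollary 6.5 and Proposition 6.1 of \cite{GoertzYu2008}, so there is no in-paper argument to compare against. Your outline does reproduce the strategy of that reference correctly: the minimality of $\tau$ forces the superspecial/$0$-dimensional behaviour of $\KRStratum{\tau}$, hence the trivializations of $A_c$ and $A_{g-c}$ over a connected base (using $w\in W_{\{c,g-c\}}$ so that $w\tau$ and $\tau$ lie in the same $W_{\{c,g-c\}}$-double coset); the residual data of the chain is the triple of flags in the subquotients $\omega_{j_0}/\omega_{j_1}$, which is a point of $\Bor(\overline{G}'_{c})$; and the KR condition becomes a relative-position condition with respect to the twisted Frobenius $\sigma'$. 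This is the right skeleton and contains no false steps.

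That said, as a proof it has a genuine gap exactly where you flag it yourself: the assertion that $y\in\KRStratum{w\tau}$ if and only if $\relpos{\phi(y)}{\sigma'(\phi(y))}=w^{-1}$ is the entire mathematical content of the theorem, and ``careful bookkeeping of how $w$ acts on the standard apartment'' is a description of the task, not its execution. The needed input is the identification of the subquotients of the de Rham lattice chain (shifted by $g$, which is where Frobenius/Verschiebung and the inner twisting by the superspecial object enter) with the standard representation of $\overline{G}'_{c}$, together with a computation in the extended affine Weyl group showing that the relative position of the Hodge flag and its $\sigma'$-transform is the finite part of $\tau^{-1}(w\tau)$ read in the correct orientation; this is what produces the inversion. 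Likewise the inverse morphism ``by Dieudonn\'e theory and displays'' needs the statement that a filtration of the constant superspecial lattice chain satisfying the flag conditions lifts uniquely to a chain of abelian schemes over $S$ (not just over a perfect field), which in \cite{GoertzYu2008} is handled by an explicit construction. So: right approach, matching the cited source, but the decisive computation and the algebraization of the inverse are left unproved.
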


\begin{proof}
This is corollary 6.5 in \cite{GoertzYu2008}. Also see the proof
of proposition 6.1 there for the definition of the morphism.
\end{proof}

Since $\pi_{\{c,g-c\},I} (\KRStratum{\tau})$ is a finite set of
points, $\KRStratum{w \tau}$ is a finite disjoint union of
Deligne-Lusztig varieties.

\subsection{Supersingular Ekedahl-Oort strata}
\label{subsec:ssEO}

Now we summarize the results of \cite{Hoeve2008}. In the case $g=1$, which
is more or less trivial from this point of view, one has to make some
obvious modifications, so we exclude it from the discussion.
Suppose that $w$
is in $\FinalElts{c} \subset \FinalElts{g}$ for some $c\leq g/2$.
Let $S$ be a connected scheme over $k$  and $x = (A,\lambda)$ be
an $S$-valued point of $\EOStratum{w}$.

Fix a supersingular elliptic curve $E$. There is a unique isogeny
\[
\rho \colon S \times E^g \to A
\]
such that the kernel of the restriction to each geometric point is
$\alpha _p ^c$ (\cite{Hoeve2008} theorem 1.2). The pull-back of
$\lambda$ gives a polarization $\mu$ on $E^g$. The pair
$(E^g,\mu)$ is a point of $\Lambda _{g,c}$, the set of isomorphism
classes of superspecial abelian varieties with a polarization with kernel
$\alpha_p ^{2c}$ (with level structure).

Let $\omega(-)$ denote the Hodge filtration of an abelian
variety. We get a subbundle
\[
\alpha (\omega (A)) \subsetneq \mathcal{O}_S \otimes (\omega(E^g)
/ \mu (\omega((E^g)^\vee))).
\]
In fact this is a bundle of $c$-dimensional isotropic subspaces in
a $2c$-dimensional symplectic vector space. Let $\psi (x)$ be the
stabilizer of this subbundle in $\Sp{2c}\times S$. This is an
$S$-valued point in a variety of parabolic subgroups of $\Sp{2c}$.

\begin{thm}
\label{thm:EOStrata} Suppose that $w$ is in $\FinalElts{c}$ for
some $c\leq g/2$ and $w \notin W_{c-1}$. Then the morphism $x
\mapsto ((E^g,\mu),\psi(x))$ is an isomorphism
\[
\EOStratum{w} \stackrel{\sim}{\to} \Lambda_{g,c} \times
\DL{\Sp{2c}}{J}{w^{-1}}{\sigma ^2}.
\]
\end{thm}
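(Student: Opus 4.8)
The plan is to use the rigidity of supersingular abelian varieties up to a prescribed isogeny to split off the factor $\Lambda_{g,c}$, and then to recognise the residual moduli as a fine Deligne--Lusztig variety by translating the combinatorics of the Ekedahl--Oort invariant into relative positions of flags. The first step is essentially the input from \cite{Hoeve2008}, Theorem~1.2: over a connected $S$, a point $x = (A,\lambda)$ of $\EOStratum{w}$ with $w\in\FinalElts{c}$ carries a \emph{unique} isogeny $\rho\colon S\times E^g\to A$ whose geometric kernels are $\alpha_p^c$, and the pull-back of $\lambda$ is a polarisation $\mu$ on $E^g$ with kernel $\alpha_p^{2c}$, so that $(E^g,\mu)\in\Lambda_{g,c}$ and the first component $x\mapsto(E^g,\mu)$ is defined. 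This rigidifies everything: since $E^g$ is superspecial it is defined over $\FF{p^2}$, and from now on one works inside the $2c$-dimensional symplectic $\mathcal O_S$-module $N:=\mathcal O_S\otimes\bigl(\omega(E^g)/\mu(\omega((E^g)^\vee))\bigr)$, on which $\Sp{2c}$ acts and which carries a natural $\sigma^2$-linear operator inherited from the $\FF{p^2}$-structure on $E^g$ (this is the source of $\sigma^2$ rather than $\sigma$ in the target). The hypothesis $w\notin W_{c-1}$ is what makes $c$ the correct minimal index; if $w$ already lay in $W_{c-1}$ the same construction would land in $\Lambda_{g,c-1}$ and the identification below would degenerate.

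Next one checks that $\rho^*$ carries the Hodge filtration $\omega(A)\subset\HDR{1}(A)$ to a $c$-dimensional isotropic subbundle $\alpha(\omega(A))$ of $N$: the kernel $\alpha_p^c$ forces $\rho^*\HDR{1}(A)$ to lie between $p\HDR{1}(E^g)$ and $\HDR{1}(E^g)$ and $\omega(A)$ to a subbundle between $\mu(\omega((E^g)^\vee))$ and $\omega(E^g)$, while rank and isotropy follow from $\rho^*\lambda=\mu$. Its stabiliser $\psi(x)$ in $\Sp{2c}\times S$ is the second component, and the crux of the theorem is that $\psi(x)$ lies in the \emph{fine} Deligne--Lusztig variety $\DL{\Sp{2c}}{J}{w^{-1}}{\sigma^2}$, i.e. that the successive refinements $P_n=(P_{n-1}\cap\sigma^2(P_{n-1}))R_u(P_{n-1})$ of $P_0=\psi(x)$ satisfy $\relpos{P_n}{\sigma^2 P_n}$ as prescribed by $w^{-1}$. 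For this one uses that $\EOStratum{w}$ is by definition the locus where the polarised $p$-kernel $A[p]$ has a fixed isomorphism type, equivalently where the canonical filtration of $\HDR{1}(A)$ (the smallest filtration generated by images and kernels of $F$ and $V$) has type $w$; via $\rho^*$ and the relation $VF=p$, the operators $F,V$ on $\HDR{1}(E^g)$ induce, modulo $\mu(\omega((E^g)^\vee))$, the operators $\sigma^{\pm 2}$ on $N$, so the canonical filtration is transported exactly to the chain $(P_n)$, and the combinatorial recipe reading $w$ off the canonical filtration becomes --- after the inversion built into passing between the $F$- and $V$-pictures --- the recipe defining membership in $\DL{\Sp{2c}}{J}{w^{-1}}{\sigma^2}$. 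I expect this translation to be the main obstacle: it must be done over a general base $S$, not just at a geometric point, and it must be matched with the known fact that the final elements of $W_c$ inside $W_g$, for $c\leq g/2$, are exactly the Ekedahl--Oort types occurring in the supersingular locus, so that source and target are nonempty together.

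Finally one builds the inverse morphism. Given $((E^g,\mu),P)$ with $P$ a point of $\DL{\Sp{2c}}{J}{w^{-1}}{\sigma^2}$, the isotropic subspace $U\subset N$ underlying $P$ determines, via Dieudonn\'e theory in families (or displays), a finite flat subgroup scheme $H\subset E^g$ such that $A:=E^g/H$ inherits a principal polarisation $\lambda$ from $\mu$ (principality uses isotropy of $U$) with $A[p]$ of the prescribed $\mathrm{BT}_1$-type (this uses the refinement conditions on $P$), so that $(A,\lambda)\in\EOStratum{w}$ and the quotient isogeny $E^g\to A$ is its minimal superspecial isogeny. Composing the two constructions and invoking the uniqueness clause of \cite{Hoeve2008}, Theorem~1.2, shows they are mutually inverse morphisms of $k$-schemes, which gives the asserted isomorphism. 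Alternatively, once bijectivity on $k$-points is known, one checks that the morphism induces isomorphisms on tangent spaces --- both sides being smooth of the same dimension, the left by the general theory of Ekedahl--Oort strata and the right as a fine Deligne--Lusztig variety --- so it is \'etale and bijective, hence an isomorphism.
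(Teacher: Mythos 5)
The paper's own proof of this theorem is not an argument from first principles: it consists of identifying the statement with Theorem~1.2 of \cite{Hoeve2008} and accounting for three superficial discrepancies --- (i) the construction there is phrased via Dieudonn\'e modules, and the translation to the Hodge-filtration picture used here goes through $M(A[p])^{(p)}\cong \HDR{1}(A)$ and the Verschiebung isomorphism onto $\omega(E^g)/\mu(\omega((E^g)^\vee))$; (ii) the appearance of $w^{-1}$ is due to a change of indexation convention for EO strata (Moonen--Wedhorn in \cite{Hoeve2008} versus van der Geer here); (iii) the quotient by an automorphism group present in \cite{Hoeve2008} is trivial here because of the prime-to-$p$ level structure. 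Your proposal instead attempts to re-derive the content of the cited theorem. That is a legitimate but far more ambitious route, and as written it has a genuine gap: the step you yourself call ``the main obstacle'' --- verifying, over an arbitrary connected base $S$, that the successive refinements $P_n=(P_{n-1}\cap\sigma^2(P_{n-1}))R_u(P_{n-1})$ of $\psi(x)$ realize exactly the relative positions defining $\DL{\Sp{2c}}{J}{w^{-1}}{\sigma^2}$ --- is asserted rather than proved. This identification of the canonical filtration with the fine Deligne--Lusztig refinement chain is precisely the technical core of \cite{Hoeve2008} and cannot be waved through; without it neither direction of your claimed isomorphism is established.

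Two further mismatches with the paper. First, you attribute the inversion $w\mapsto w^{-1}$ to the duality between the $F$- and $V$-pictures; in the paper it is purely a bookkeeping discrepancy between two standard indexations of the EO stratification, so your explanation risks introducing the inverse twice or not at all. Second, you never address the automorphism group of $(E^g,\mu)$: if you invoke \cite{Hoeve2008} Theorem~1.2 verbatim for the uniqueness of $\rho$, you must note that the target there is a quotient by that group, and that the quotient is harmless here only because the level structure rigidifies $\Lambda_{g,c}$. The closing alternative (\'etaleness plus bijectivity on $k$-points) is fine in principle but again presupposes smoothness statements whose proofs are exactly what is being cited.
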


\begin{proof}
This is theorem 1.2 in \cite{Hoeve2008}, except for a few
superficial differences.

First of all in \cite{Hoeve2008} lemma 3.1 the isomorphism is
constructed with Dieudonn\'e theory. If $M(A[p])$ is the
Dieudonn\'e module of $A[p]$, then there is a natural isomorphism
$M(A[p])^{(p)} \cong H^1 _\mathrm{dR} (A)$. The Verschiebung gives
an isomorphism
\[
    \omega(E^g) / \mu (\omega((E^g)^\vee))
    \stackrel{\sim}{\leftarrow}
        M(E^g [p]) / \mu( (E^g)^\vee[p] ) = M(\ker (\mu)).
\]
Under this isomorphism the image  $\omega (A)$ corresponds to the
image of $M(A[p])$,  to which $x$ is sent in \cite{Hoeve2008}. So
the two constructions are equivalent.

Secondly, we get $w^{-1}$ instead of $w$, because we use a different
indexation. See \cite{GoertzYu2008-2} section 2.4 for the different indexations
for the EO-stratifications. In \cite{Hoeve2008} the indexation of 
Moonen and Wedhorn is used, while in \cite{GoertzYu2008} the indexation
of van der Geer is used. 

Thirdly, here we don't need to divide out the action of an
automorphism group. This action is trivial because of the level
structures.
\end{proof}

\section{Comparison between supersingular EO strata and supersingular KR
strata}
\label{sec3}

Remember that for $w\in \FinalElts{g}$ the projection $\pi \colon
\mathcal{A}_{g,I} \to \mathcal{A}_g$ restricts to a finite \'etale
surjective map $\KRStratum{w \tau} \to \EOStratum{w}$, see
\cite{ekedahl-vdgeer}, \cite{GoertzYu2008-2} section 9. In this section we
show how this map relates to the above descriptions of
supersingular KR- and EO-strata. We start with the connected
components.

\begin{lem}
The map $\beta_c\colon \pi_{\{c,g-c\}} (\KRStratum{\tau}) \to
\Lambda_{g,c}$ that sends a point $((A_c,\lambda _c) \to (A_{g-c},\lambda
_{g-c}))$ to $(A_{c}^\vee,\lambda _{c}^\vee)$ is a bijection.
\end{lem}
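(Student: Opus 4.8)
The plan is to exhibit an explicit inverse, or failing that, to establish injectivity and surjectivity separately using the structure of $\KRStratum{\tau}$ as a $0$-dimensional scheme. First I would recall that $\pi_{\{c,g-c\}}(\KRStratum{\tau})$ is a finite set of points, each of which records a pair of superspecial abelian varieties $(A_c,\lambda_c)\to(A_{g-c},\lambda_{g-c})$ together with the isogeny between them, where $\lambda_c$ and $\lambda_{g-c}$ are polarizations whose kernels are controlled by the fact that the chain lies over $\tau$ (so the degrees of the polarizations and of the isogeny $A_c\to A_{g-c}$ are fixed). The target $\Lambda_{g,c}$ is the set of isomorphism classes of superspecial abelian varieties equipped with a polarization whose kernel is $\alpha_p^{2c}$. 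The map $\beta_c$ sends such a chain to $(A_c^\vee,\lambda_c^\vee)$; one should first check this lands in $\Lambda_{g,c}$, i.e. that the dual polarization $\lambda_c^\vee$ on $A_c^\vee$ has kernel $\alpha_p^{2c}$ — this follows from the combinatorics of $\tau$, since $\lambda_c$ has kernel of the appropriate $p$-power order and $A_c$ is superspecial, so dualizing interchanges the roles in a way that produces exactly $\alpha_p^{2c}$.

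For surjectivity, given $(B,\mu)\in\Lambda_{g,c}$, I would reconstruct a point of $\pi_{\{c,g-c\}}(\KRStratum{\tau})$ by setting $A_c = B^\vee$, equipping it with $\lambda_c = \mu^\vee$, and then producing $A_{g-c}$ as the quotient of $A_c$ by a suitable isotropic subgroup scheme of $A_c[p]$ of the correct order, with the induced polarization $\lambda_{g-c}$; the point is that such a chain, since both ends are superspecial with the prescribed polarization degrees, automatically lies in the KR stratum $\KRStratum{\tau}$ because $\tau$ is the \emph{minimal} element of $\mathrm{Adm}(\mu)$ and $\KRStratum{\tau}$ is exactly the superspecial locus at Iwahori level (this is the content of the results of \cite{GoertzYu2008} summarized above). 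For injectivity, if two chains have isomorphic $(A_c^\vee,\lambda_c^\vee)$, then $(A_c,\lambda_c)$ agree up to isomorphism, and one has to argue that the rest of the chain — here just $A_{g-c}$ and the isogeny — is then determined; this uses that the kernel of $A_c\to A_{g-c}$ is forced by the position $\tau$ together with the compatibility of the polarizations $\lambda_c,\lambda_{g-c}$, so there is no additional choice.

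The main obstacle I expect is bookkeeping rather than anything deep: one must pin down precisely which subgroup schemes and polarization kernels are permitted by the condition ``lies over $\tau$'', and verify that dualizing $(A_c,\lambda_c)$ does not lose the data of the isogeny to $A_{g-c}$ — in other words, that the forgetful map from the full $\tau$-chain at Iwahori level down to just the $(A_c,\lambda_c)$-end (before dualizing) is already a bijection onto its image, so that $\beta_c$ differs from it only by the harmless involution $(A_c,\lambda_c)\mapsto(A_c^\vee,\lambda_c^\vee)$ composed with the identification of the image with $\Lambda_{g,c}$. Once the degrees and kernels are matched up explicitly via the recipe for $\tau$ and $\mu = (1^{(g)},0^{(g)})$, the bijectivity is formal. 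I would also remark that this is essentially a restatement, in the polarized superspecial setting, of the standard fact that a superspecial abelian variety is determined up to isomorphism independently of the polarization (all superspecial p.p.\ abelian varieties of dimension $g$ being isogenous), with the polarization data carried along; this is why the map is a bijection of finite sets and not merely a surjection.
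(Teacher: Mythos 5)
Your overall strategy (exhibit an inverse, or check injectivity and surjectivity of a map of finite sets) matches the paper's, and you correctly locate the crux: given $(A,\lambda)\in\Lambda_{g,c}$ one must show that the whole chain, not just the end $A_c=A^\vee$, is canonically determined. But the proposal has a gap exactly at that point. For surjectivity you propose to take $A_{g-c}$ to be the quotient of $A_c$ by ``a suitable isotropic subgroup scheme of the correct order'' and claim the resulting chain ``automatically lies in $\KRStratum{\tau}$ because $\tau$ is minimal and $\KRStratum{\tau}$ is exactly the superspecial locus at Iwahori level.'' That characterization is not correct: membership in $\pi_{\{c,g-c\},I}(\KRStratum{\tau})$ requires not only that $A_c$ and $A_{g-c}$ be superspecial with the right polarization degrees, but also that the isogeny between them be the right one (the paper itself stresses, in Section~\ref{sec4}, that the $c$-superspecial condition includes the requirement that the relevant composite be the Frobenius). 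An arbitrary $\alpha_p$-type subgroup of the correct order will in general not produce a point of the minimal stratum, so the construction as stated does not land where you need it, and correspondingly your injectivity argument only asserts, without proof, that the kernel is ``forced by the position $\tau$.''

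The single missing idea --- which is essentially the entire content of the paper's proof --- is the explicit identification of that forced isogeny: since $\ker\lambda\cong\alpha_p^{2c}$ is contained in $\ker F$ (as $A$ is superspecial), the Frobenius $F\colon A\to A^{(p)}$ factors through $\lambda\colon A\to A^\vee$, yielding a canonical isogeny $A_c=A^\vee\to A^{(p)}=A_{g-c}$. With this choice the chain visibly lies in the image of $\KRStratum{\tau}$ and the construction is inverse to $\beta_c$. Your closing remark that the bijection reflects the fact that a superspecial abelian variety is determined independently of its polarization is also beside the point: what matters is that the remaining data of the chain admits no moduli once $(A_c,\lambda_c)$ is fixed, and that is precisely what the Frobenius factorization establishes.
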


\begin{proof}
We construct an inverse as follows. Given
$(A,\lambda)\in \Lambda_{g,c}$ we put $A_c = A^\vee$, $A_{g-c} = A^{(p)}$.
The Frobenius $A \rightarrow A^{(p)}$ factors through $A^\vee$, and we get
an isogeny $A_c \rightarrow A_{g-c}$. This, together with the natural
polarizations on $A_c$, $A_{g-c}$, is the desired point in $\pi_{\{c,g-c\}}
(\KRStratum{\tau})$.
\end{proof}

Next we look at the index sets of the stratifications.

\begin{lem}
We have:
\[
\FinalElts{g} \cap W_{\{c,g-c\}} = \FinalElts{c}.
\]
\end{lem}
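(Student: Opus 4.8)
The plan is to reduce both sides to explicit standard parabolic subgroups of $W^a_g$ and then apply the characterisation of final elements by their right descents. (As in the rest of the paper I take $0\le c\le g/2$.) The first thing I would do is identify $W_g\cap W_{\{c,g-c\}}$. Since $W_g=W_{\{0\}}$ and $W_{\{c,g-c\}}$ are standard parabolic subgroups of $W^a_g$, I recall the standard fact for Coxeter groups that an intersection of standard parabolics is the standard parabolic on the intersection of the defining sets of simple reflections; thus $W_g\cap W_{\{c,g-c\}}=W_{\{0,c,g-c\}}=\langle s_i : 1\le i\le g,\ i\ne c, g-c\rangle$. Inspecting the Dynkin diagram of $C_g$ — the path $s_1-\cdots-s_{g-1}-s_g$ with $s_g$ the special vertex — one sees that deleting $s_c$ and $s_{g-c}$ disconnects it, and, using $c\le g/2$, that the component through $s_g$ is exactly $\{s_{g-c+1},\dots,s_g\}$, i.e. the image of $W_c$ under the embedding $s^{(c)}_{c+1-i}\mapsto s_{g+1-i}$, while all remaining generators lie in $S_g=\langle s_1,\dots,s_{g-1}\rangle$. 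Hence $W_g\cap W_{\{c,g-c\}}=W''\times W_c$ with $W''\subseteq S_g$ commuting with $W_c$ and with length additive across the two factors. (The degenerate cases $c=0$, where this reduces to $S_g$, and $c=g/2$, where $\{c,g-c\}$ is a single vertex, need no separate treatment.)

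For the inclusion $\FinalElts{c}\subseteq\FinalElts{g}\cap W_{\{c,g-c\}}$ I would argue that, by the fact recalled in Section~2, the embedding $W_c\hookrightarrow W_g$ carries $\FinalElts{c}$ into $\FinalElts{g}$, and that any $w\in\FinalElts{c}$ lies in $W_c=\langle s_{g-c+1},\dots,s_g\rangle\subseteq W_{\{c,g-c\}}$ because each of the indices $g-c+1,\dots,g$ exceeds $g-c\ge c$ and hence lies outside $\{c,g-c\}$.

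For the reverse inclusion, take $w\in\FinalElts{g}\cap W_{\{c,g-c\}}$; since $\FinalElts{g}\subseteq W_g$, the description above lets me write $w=uv$ with $u\in W''$, $v\in W_c$ and $\ell(w)=\ell(u)+\ell(v)$. If $u\ne e$, it has a right descent $s_j$ with $j\in\{1,\dots,g-1\}$; as $s_j$ commutes with $v$ and length is additive, $\ell(ws_j)=\ell(us_j)+\ell(v)=\ell(w)-1$, contradicting $w\in\FinalElts{g}$, so $w=v\in W_c$. Then $w\in\FinalElts{g}$ forces $\ell(ws_i)>\ell(w)$ for all $i\in\{1,\dots,g-1\}$, in particular for $i\in\{g-c+1,\dots,g-1\}$, which are precisely the images of the simple reflections generating $S_c\subset W_c$; since $W_c$ is a standard parabolic of $W_g$ its length function is the restriction of that of $W_g$, so this says $w$ is the minimal-length representative of $wS_c$, i.e. $w\in\FinalElts{c}$.

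I expect the one delicate point to be that first step — correctly identifying $W_g\cap W_{\{c,g-c\}}$ given the paper's complementary convention for $W_J$, and checking via $c\le g/2$ that the component of the truncated $C_g$-diagram through $s_g$ is exactly the image of $W_c$ (rather than a proper sub- or super-diagram). Once that is in place, the rest is the standard right-descent bookkeeping for minimal coset representatives.
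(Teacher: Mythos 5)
Your proof is correct, but for the hard inclusion $\FinalElts{g} \cap W_{\{c,g-c\}} \subseteq \FinalElts{c}$ it takes a genuinely different route from the paper's. The paper invokes a structural fact about final elements that is specific to type $C$ (quoted from the proof of Lemma 7.1 in \cite{Hoeve2008}): the support of any $w \in \FinalElts{g}$ — the set of simple reflections occurring in a reduced expression — is an interval $\{s_i,\dots,s_g\}$ ending at $s_g$. Since membership in the standard parabolic $W_{\{c,g-c\}}$ forces the support to avoid $s_{g-c}$, this immediately gives $i>g-c$, hence $w\in W_c=\langle s_{g-c+1},\dots,s_g\rangle$. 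You avoid that fact entirely: you identify $W_g\cap W_{\{c,g-c\}}$ as the standard parabolic on the intersection of the generating sets, split it as $W''\times W_c$ along the disconnected components of the truncated diagram, and kill the $W''$-component of $w$ by producing a right descent of $w$ lying in $S_g$, contradicting finality. Both arguments then conclude in the same way: the easy inclusion uses the stated fact that $W_c\hookrightarrow W_g$ carries $\FinalElts{c}$ into $\FinalElts{g}$, and the final step that an element of $W_c\cap\FinalElts{g}$ is final \emph{in $W_c$} follows from the descent characterization of minimal coset representatives together with $S_c\subset S_g$ (a point the paper leaves implicit and you spell out). The trade-off: your argument is longer but self-contained, using only general Coxeter combinatorics (intersections of standard parabolics, length additivity on commuting factors, descents), whereas the paper's is shorter at the price of citing the interval-support property of final elements. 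Your handling of the degenerate cases $c=0$ and $c=g/2$ and of the index bookkeeping under the embedding $s_{c+1-i}\mapsto s_{g+1-i}$ is accurate.
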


\begin{proof}
Since $W_c \subset W_{\{c,g-c\}}$, the right hand side is included
in the left hand side.

For the other inclusion, suppose that $w$ is in $\FinalElts{g}
\cap W_{\{c,g-c\}}$. The set of simple reflections in a reduced
expression of $w$ is equal to $ \{s_i, s_{i+1},\dots ,
s_{g-1},s_g\}$ for some $i$ (see the proof of lemma 7.1 in
\cite{Hoeve2008}). Since $s_{g-c}$ is not in $W_{\{c,g-c\}}$, we
must have $i>g-c$. Because $W_c \subset W_g$ is generated by all
$s_j$ with $j>g-c$, it contains $w$.
\end{proof}

Finally we look at the Deligne-Lusztig varieties. The group
$\overline{G}' _c$ splits over $k$ (in fact already over
$\FF{p^2}$) as
\[
    \overline{G}'_c \cong \Sp{2c} \times
    \SL{g-2c} \times \Sp{2c}.
\]
as can be seen  from the Dynkin diagram of $\overline{G}' _c$.
It follows from the description of $\sigma '$ in
\cite{GoertzYu2008} that with respect to this decomposition
\[
\sigma ' (g_1,g_2,g_3) =
(\sigma(g_3),\tilde{\sigma}(g_2),\sigma(g_1)),
\]
where $\sigma$ is the Frobenius of $\mathrm{Sp}_{2c}$ and
$\tilde{\sigma}$ is the Frobenius of a $\SU{g-2c}$. 

Anything related to $\overline{G}' _c$ splits in a similar way.
For instance, its absolute Weyl group splits as
\begin{equation} \label{Weyl_gp_as_product}
W_{\{c,g-c\}} = W_c \times S_{g-2c} \times W_c,
\end{equation}
where $S_{g-2c}$ is the symmetric group, which is the Weyl group
of $\SL{g-2c}$. The inclusion $W_c \subset W_g$ we are using, is
the inclusion of $W_c$ in $W_{\{c,g-c\}}$ as the last factor above, followed by
$W_{\{c,g-c\}} \subset W_g$.

\begin{prop}
For $w\in W_c \subset W_g$, the projection
\[
    \Bor(\overline{G}' _c) = \Bor(\Sp{2c})\times \Bor(\SL{g-2c})
    \times \Bor(\Sp{2c}) \to \Bor(\SL{g-2c})
    \times \Bor(\Sp{2c})
\]
to the final two factors induces an isomorphism
\[
    \DLB{\overline{G}'_c}{w}{\sigma'} \cong
    \DLB{\SL{g-2c}}{1}{\tilde{\sigma}} \times
    \DLB{\Sp{2c}}{w}{\sigma  ^2}.
\]
\end{prop}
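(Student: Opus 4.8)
The proposition asserts that the Deligne–Lusztig variety $\DLB{\overline{G}'_c}{w}{\sigma'}$ for $w$ lying in the last $\Sp{2c}$-factor of $W_{\{c,g-c\}} = W_c \times S_{g-2c} \times W_c$ decomposes as a product, where the $\SL{g-2c}$-factor contributes only the DL variety for the identity element (with its twisted Frobenius $\tilde\sigma$), and the two $\Sp{2c}$-factors collapse to a single $\Sp{2c}$-DL variety with Frobenius $\sigma^2$. The key structural input is that $\sigma'$ permutes the outer $\Sp{2c}$-factors, so it is \emph{not} a product Frobenius: the automorphism does not respect the factorization of $\overline{G}'_c$ as a direct product.

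**The plan.**

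First I would recall that for a connected reductive group $G = G_1 \times \cdots \times G_r$ over $k$ with a Frobenius $\sigma$ that may permute the factors, the relative position of $B$ and $\sigma(B)$ is computed factor-by-factor after tracking the permutation. Concretely, writing a Borel $B = (B_1,\dots,B_r)$ and using $\sigma'(g_1,g_2,g_3) = (\sigma(g_3),\tilde\sigma(g_2),\sigma(g_1))$, we have $\sigma'(B_1,B_2,B_3) = (\sigma(B_3),\tilde\sigma(B_2),\sigma(B_1))$, hence
\[
\relpos{(B_1,B_2,B_3)}{\sigma'(B_1,B_2,B_3)} = \bigl(\,\relpos{B_1}{\sigma(B_3)},\ \relpos{B_2}{\tilde\sigma(B_2)},\ \relpos{B_3}{\sigma(B_1)}\,\bigr),
\]
under the identification of the absolute Weyl group via \eqref{Weyl_gp_as_product}.

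Next, since $w \in W_c$ sits in the last factor, the condition $\relpos{(B_1,B_2,B_3)}{\sigma'(B_1,B_2,B_3)} = w$ unfolds into three equations: $\relpos{B_1}{\sigma(B_3)} = 1$, $\relpos{B_2}{\tilde\sigma(B_2)} = 1$, and $\relpos{B_3}{\sigma(B_1)} = w$. The middle equation says exactly that $(B_2) \in \DLB{\SL{g-2c}}{1}{\tilde\sigma}$. The first equation says $B_1 = \sigma(B_3)$; this determines $B_1$ uniquely in terms of $B_3$, so the projection $(B_1,B_2,B_3)\mapsto (B_2,B_3)$ is injective on the DL variety with inverse $(B_2,B_3)\mapsto(\sigma(B_3),B_2,B_3)$. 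Substituting $B_1 = \sigma(B_3)$ into the third equation gives $\relpos{B_3}{\sigma(\sigma(B_3))} = \relpos{B_3}{\sigma^2(B_3)} = w$, i.e. $B_3 \in \DLB{\Sp{2c}}{w}{\sigma^2}$. This is precisely the claimed product decomposition, and the displayed projection in the statement is the composite of this isomorphism with the (identity on the $\SL{g-2c}$ and second $\Sp{2c}$ factors) rearrangement.

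**The main obstacle.**

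The only genuine subtlety — and the step I would be most careful about — is making sure the identification of $W_{\{c,g-c\}}$ with $W_c \times S_{g-2c} \times W_c$ and of $\sigma'$ with $(g_1,g_2,g_3)\mapsto(\sigma(g_3),\tilde\sigma(g_2),\sigma(g_1))$ are compatible, so that the relative-position computation lands in the right factors in the right order; this is where one must invoke the explicit description of $\sigma'$ from \cite{GoertzYu2008} and the inclusion convention fixed just before the proposition. A secondary point is that the argument is purely at the level of $k$-points of varieties of Borel subgroups, so one should note that the maps $(B_2,B_3)\mapsto(\sigma(B_3),B_2,B_3)$ and its inverse are morphisms of varieties (the Frobenius $\sigma$ is a morphism of schemes over $k$, being an isogeny here as in the setup of Theorem \ref{thm:KRStrata}), so the bijection on points upgrades to an isomorphism of varieties. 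Everything else is a formal unwinding of definitions.
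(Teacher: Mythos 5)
Your proposal is correct and follows essentially the same route as the paper's proof: unwind the relative-position condition into the three factor-wise equations, use the first to eliminate $B_1=\sigma(B_3)$, and substitute into the third to obtain the $\sigma^2$-twisted Deligne--Lusztig variety for $\Sp{2c}$. Your additional remarks on the compatibility of the Weyl-group identification and on upgrading the bijection on points to an isomorphism of varieties are sensible refinements of the same argument.
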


\begin{proof}
The Deligne-Lusztig variety $\DLB{\overline{G}' _c}{w}{\sigma '}$
consists of triples $(g_1,g_2,g_3)$ such that
\[
\relpos{g_1}{\sigma(g_3)} = 1, \,
\relpos{g_2}{\tilde{\sigma}(g_2)} = 1, \,
\relpos{g_3}{\sigma(g_1)} = w'.
\]
The first and last equations are equivalent to $g_1 = \sigma
(g_3)$ and $\relpos{g_3}{\sigma ^2 (g_3)} = w'$, which is the
equation for $\DLB{\Sp{2c}}{w}{\sigma  ^2}$.
\end{proof}

The proposition gives a morphism
\[
f \colon \DLB{\overline{G}'_c}{w}{\sigma'} \to
    \DLB{\Sp{2c}}{w}{\sigma  ^2} \to
    \DL{\Sp{2c}}{I}{w}{\sigma ^2}.
\]

\begin{thm}\label{thm:KR_EO_compatible}
For $w \in \FinalElts{c}$ there is a commutative diagram
\[
\begin{diagram}
\KRStratum{w \tau} & \rTo ^{\sim} &  \pi_{\{c,g-c\}}(\mathcal A_{I,\tau}) \times
\DLB{\overline{G}'_c}{w^{-1}}{\sigma '} \\
\dTo ^{\pi} & & \dTo _{\beta_{c} \times f} \\
\EOStratum{w} & \rTo ^\sim &  \Lambda_{g,c} \times
\DL{\Sp{2c}}{I}{w^{-1}}{\sigma ^2}
\end{diagram}
\]
where the horizontal maps are the isomorphisms from
\ref{thm:KRStrata} and \ref{thm:EOStrata}.
\end{thm}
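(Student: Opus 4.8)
The strategy is to check commutativity on $S$-valued points and to observe that, once the inputs are matched, both horizontal isomorphisms are computed by the same Dieudonn\'e-theoretic recipe. Let $S$ be a connected $k$-scheme and let $y = (A_0\to\cdots\to A_g,\lambda_0,\lambda_g,\eta)$ be in $\KRStratum{w\tau}(S)$; since $\pi\colon\mathcal A_I\to\mathcal A_g$ forgets all of the chain except $A_0$, one has $\pi(y) = (A_0,\lambda_0)\in\EOStratum{w}(S)$. Chasing $y$ right and then down gives $\bigl(\beta_c(\pi_{\{c,g-c\},I}(y)),\, f(\phi(y))\bigr)$, while chasing it down and then right gives $\bigl((E^g,\mu),\, \psi(A_0,\lambda_0)\bigr)$. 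So there are two claims to prove: (a) $\beta_c(\pi_{\{c,g-c\},I}(y)) = (E^g,\mu)$ in $\Lambda_{g,c}$, and (b) $f(\phi(y)) = \psi(A_0,\lambda_0)$ as $S$-points of $\DL{\Sp{2c}}{I}{w^{-1}}{\sigma^2}$. The two targets here really do coincide: since $w$ lies in the last factor of the splitting $W_{\{c,g-c\}} = W_c\times S_{g-2c}\times W_c$ and that factor is stable under inversion, the Proposition applied to $w^{-1}$ identifies $\DLB{\overline{G}'_c}{w^{-1}}{\sigma'}$ with $\DLB{\SL{g-2c}}{1}{\tilde\sigma}\times\DLB{\Sp{2c}}{w^{-1}}{\sigma^2}$, so $f$ lands in $\DL{\Sp{2c}}{I}{w^{-1}}{\sigma^2}$, which is precisely the target of $\psi$ in Theorem \ref{thm:EOStrata}; the fact that both Theorems \ref{thm:KRStrata} and \ref{thm:EOStrata} carry $w^{-1}$ (because of the indexation conventions) is what makes the comparison free of any spurious transpose.

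For (a), recall that $E^g$ is the source of the unique isogeny $\rho\colon S\times E^g\to A_0$ whose kernel on geometric fibres is $\alpha_p^c$, with $\mu = \rho^*\lambda_0$, whereas $\beta_c(\pi_{\{c,g-c\},I}(y)) = (A_c^\vee,\lambda_c^\vee)$. One shows these agree by identifying $\rho$ with a chain morphism: since $w\tau$ is $c$-superspecial the abelian scheme $A_c$ is fibrewise superspecial and $\lambda_0$ identifies $A_0$ with $A_0^\vee$, so the composite $A_0\xrightarrow{\lambda_0}A_0^\vee\xrightarrow{\rho^\vee}(S\times E^g)^\vee = S\times E^g$ is an isogeny of degree $p^c$ with the same (infinitesimal) kernel as the first chain isogeny $A_0\to A_c$; hence $A_c\cong S\times E^g$ compatibly with the natural polarizations, i.e.\ $(A_c^\vee,\lambda_c^\vee) = (E^g,\mu)$. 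This is in essence the construction of the cover in \cite{ekedahl-vdgeer}, \cite{GoertzYu2008-2} section 9, and is cleanest on Dieudonn\'e modules.

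For (b), I would unwind both recipes. By the Proposition and the discussion following it, $f$ sends $\phi(y)\in\Bor(\Sp{2c})\times\Bor(\SL{g-2c})\times\Bor(\Sp{2c})$ to its component in the last $\Bor(\Sp{2c})$ --- the factor whose Weyl group is the copy of $W_c$ used for $\FinalElts{c}\subset\FinalElts{g}$ --- and then to the $c$-dimensional isotropic subbundle that this isotropic flag determines. By the construction of $\phi$ in Theorem \ref{thm:KRStrata}, that component is built from the flag attached to the corresponding length-$2c$ pair $(j_0,j_1) = (g-c,g+c)$, living inside the $2c$-dimensional symplectic bundle $\omega_{j_0}/\omega_{j_1} = \mathcal{O}_S\otimes(\omega'_{j_0}/\omega'_{j_1})$, and the Lagrangian it cuts out is the image under the chain maps $\alpha$ of the Hodge filtration of the middle term $A_g$ of that segment. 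On the other side, $\psi(A_0,\lambda_0)$ is the stabilizer in $\Sp{2c}$ of the isotropic subbundle $\alpha(\omega(A_0))\subsetneq\mathcal{O}_S\otimes(\omega(E^g)/\mu(\omega((E^g)^\vee)))$. Using the identification of $A_c$ with $S\times E^g$ from part (a), the self-duality of the chain at the principally polarized vertices $0$ and $g$, and the Verschiebung/Frobenius dictionary $M(A_0[p])^{(p)}\cong\HDR{1}(A_0)$ together with $M(\ker\mu)\xrightarrow{\sim}\omega(E^g)/\mu(\omega((E^g)^\vee))$ recalled in the proof of Theorem \ref{thm:EOStrata}, one transports the Lagrangian of the Kottwitz-Rapoport side into $\omega(E^g)/\mu(\omega((E^g)^\vee))$ by a symplectic isomorphism, and the content is that it becomes exactly $\alpha(\omega(A_0))$. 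Since a point of $\DL{\Sp{2c}}{I}{w^{-1}}{\sigma^2}$ is precisely such an isotropic subbundle --- the finer data of the successive refinements being determined by $w^{-1}$ and hence matching automatically once the subbundle does --- this yields $f(\phi(y)) = \psi(A_0,\lambda_0)$, and together with (a) the square commutes.

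The main obstacle is the bookkeeping inside (b): one must keep consistent track of the passage to dual abelian varieties, the dual polarization $\lambda_c^\vee$, the cyclic wrap-around index $g+c$ in the Kottwitz-Rapoport chain, and the several Frobenius twists relating de Rham cohomology, Dieudonn\'e modules and Verschiebung, while matching the flag determined by the middle term $A_g$ of the relevant segment with the Hodge filtration of $A_0$. I would fix all of these conventions --- which copy of $W_c$, which pair $(j_0,j_1)$, and the direction of every isogeny in the chain --- at the very outset, since that is where a sign or a transpose is most likely to slip in; once the dictionary is pinned down, each individual identification is a routine Dieudonn\'e-module calculation.
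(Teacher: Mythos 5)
Your overall strategy (checking on $S$-points and splitting the problem into matching the discrete components and matching the Deligne--Lusztig data) is the paper's strategy, and your part (a) is essentially right --- though the paper gets it more directly: by the uniqueness in Theorem \ref{thm:EOStrata}, $\rho$ \emph{is} the isogeny $A_{-c}\to A_0$ obtained by extending the chain by duality and periodicity, so $(E^g,\mu)\cong (A_{-c},\lambda_{-c})=(A_c^\vee,\lambda_c^\vee)=\beta_c(\pi_{\{c,g-c\},I}(y))$ with no need for your ``same kernel'' step (which, as written, is exactly the point where uniqueness of $\rho$ or $c$-superspeciality must be invoked and isn't).

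Part (b), however, contains a genuine bookkeeping error, of precisely the kind you warned against. You assign the last $\Bor(\Sp{2c})$-factor (the one generated by $s_{g-c+1},\dots,s_g$, onto which $f$ projects) to the segment $(j_0,j_1)=(g-c,g+c)$ of the chain, with middle term $A_g$. But the identification of the quotients $\omega(A_{j_0})/\omega(A_{j_1})$ with quotients of lattices in the standard lattice chain involves a shift of the indices by $g$ (see the proof of Prop.~6.1 in \cite{GoertzYu2008}): the Dynkin-diagram component $\{g-c+1,\dots,g\}$ corresponds to the segment $(-c,c)$ with middle term $A_0$, while $\{0,\dots,c-1\}$ corresponds to $(g-c,g+c)$ with middle term $A_g$. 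This matters: the Lagrangian that $f$ extracts is literally $\omega(A_0)\subset\omega(A_c)/\omega(A_{-c})=\mathcal O_S\otimes(\omega(E^g)/\mu(\omega((E^g)^\vee)))$ (using $A_{-c}\cong S\times E^g$ from part (a)), which is exactly $\psi(A_0,\lambda_0)$, so commutativity is immediate and no ``transport'' is needed. Under your assignment you would instead have to carry $\alpha(\omega(A_g))\subset\omega(A_{g+c})/\omega(A_{g-c})$ over to $\alpha(\omega(A_0))$ by ``a symplectic isomorphism''; no such natural identification exists --- the two outer factors are interchanged by $\sigma'$, and the relation between their flags is the Deligne--Lusztig condition $\relpos{g_3}{\sigma^2(g_3)}=w'$, not an equality. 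So as written the key step of (b) would fail; fixing it amounts to redoing the index bookkeeping and recovering the paper's assignment of Weyl-group factors to chain segments.
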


\begin{proof}
Suppose $(A_i)_{i\in I}$ is an $S$-valued point  in $\KRStratum{w \tau}$.
Since the morphism $\rho \colon S \times E^g \to A_0$ from theorem
\ref{thm:EOStrata} is unique, it must be equal to $A_{-c} \to
A_0$. (Here we extend the chain $(A_i)_i$ to a chain indexed by $\mathbb Z$
by duality and periodicity as usual.)
Under the horizontal map in the upper row, $(A_i)_i$ is mapped to the flags
of the images of the Hodge filtrations $\omega(A_i)$ inside $\omega(A_c)/\omega(A_{-c})$,
$\omega(A_{g-c})/\omega(A_{c})$, and $\omega(A_{g+c})/\omega(A_{g-c})$.
Because the stratum is $c$-superspecial, these quotients can actually be
identified with quotients of the de Rham cohomology, or in other words of
lattices in the standard lattice chain, however with a shift of the indices
by $g$. See the proof of Prop.~6.1 in \cite{GoertzYu2008}. This shows that
when we identify the Weyl group of $\overline{G}'_c$ with $W_{\{c, g-c\}}$
and decompose it as in \eqref{Weyl_gp_as_product}, then the subgroup
generated by $s_0, \dots, s_{c-1}$ corresponds to the flags in
$\omega(A_{g+c})/\omega(A_{g-c})$; the middle factor, i.e., the subgroup
generated by $s_{c+1}, \dots, s_{g-c-1}$ corresponds to the flags in
$\omega(A_{g-c})/\omega(A_{c})$; and finally the part generated by
$s_{g-c+1}, \dots, s_g$ corresponds to flags in
$\omega(A_c)/\omega(A_{-c})$. The projection considered in the previous
proposition projects onto the latter two factors, and in particular, the
map $f$ we defined takes the flag $(\omega(A_i))_i$ to $\omega(A_0)$ which is
the maximal totally isotropic member of the flag in the third factor.
Since the map $\pi$ which gives the left column maps our chain $(A_i)_i$ to
$A_0$, and the lower horizontal map maps $A_0$ to $\omega(A_0)$, we see
that the diagram commutes.
\end{proof}


\section{Supersingular KR strata in the parahoric case}
\label{sec4}

Let $J \subseteq I$ be the type of a standard parahoric subgroup. We call
a KR-stratum in $\mathcal A _J$ \emph{supersingular}, if
it is contained in the supersingular locus.  We want
to describe these  strata. As one would hope, there is a description completely
analogous to the one in \cite{GoertzYu2008}, and it can in fact be
derived from the Iwahori version with relatively little additional work.

We make the following definitions, analogous to the Iwahori
case.

\begin{defn}
Let $J \subseteq I$, and let $c\in I$.
\begin{enumerate}
\item
A stratum $\mathcal A_{J, x}$ is called \emph{$c$-superspecial},
if $\pi_{J, I}^{-1}(\mathcal A_{J, x})$ is a union of
$c$-superspecial KR strata in $\mathcal A_I$, i.e., if
\[
W_J x W_J \cap {\rm Adm}(\mu) \subseteq W_{\{c, g-c\}}\tau.
\]
\item
A stratum $\mathcal A_{J, x}$ is called \emph{superspecial}, if it
is superspecial for some $c\in I$.
\end{enumerate}
\end{defn}

\begin{lem}
There exists a $c$-superspecial stratum in $\mathcal A_J$ if and only if
$c$ and $g-c$ are in $J$.
\end{lem}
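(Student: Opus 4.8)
The plan is to analyze the condition $W_J x W_J \cap {\rm Adm}(\mu) \subseteq W_{\{c,g-c\}}\tau$ in terms of the combinatorics of the extended affine Weyl group, using what is already known in the Iwahori case ($J=I$), where $\KRStratum{w\tau}$ is $c$-superspecial precisely when $w \in W_{\{c,g-c\}}$.

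First I would prove the ``only if'' direction. Suppose a $c$-superspecial stratum $\mathcal A_{J,x}$ exists. By definition $\pi_{J,I}^{-1}(\mathcal A_{J,x})$ is a nonempty union of $c$-superspecial strata $\mathcal A_{I,v}$, so by the Iwahori case each such $v$ lies in $W_{\{c,g-c\}}\tau$; in particular $v = w\tau$ for some $w \in W_{\{c,g-c\}}$. Now the point is that the double coset $W_J v W_J = W_J w\tau W_J$ is stable under left and right multiplication by all $s_i$ with $i \notin J$ (using the paper's warning convention for $W_J$). If some $s_i$ with $i \notin J$ did not lie in $W_{\{c,g-c\}}$ — equivalently $i \in \{c, g-c\}$ — then multiplying would move $v$ outside $W_{\{c,g-c\}}\tau$, contradicting $c$-superspeciality. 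Hence $s_c, s_{g-c} \in W_{\{c,g-c\}} \cap (\text{stabilizer})$ forces $c, g-c \in J$. The main technical point to pin down here is that left/right multiplication of $w\tau$ by a simple reflection $s_i$ (via the identity $\tau s_i \tau^{-1} = s_{g-i}$, since $\tau$ is the length-zero element acting on the affine diagram by $i \mapsto g-i$) stays inside $W_{\{c,g-c\}}\tau$ exactly when $i \in \{c+1,\dots,g-c-1\} \cup \{0,\dots,c-1\}$ shifted appropriately — i.e. when $s_i \in W_{\{c,g-c\}}$. So the obstruction to $c$-superspeciality is precisely the presence of $s_c$ or $s_{g-c}$ among the generators of $W_J$ in the paper's convention, i.e. the presence of $c$ or $g-c$ in the complement of $J$.

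For the ``if'' direction, assume $c, g-c \in J$. I would exhibit an explicit $c$-superspecial stratum: take $x = \overline{\tau}$, the image of $\tau$ in ${\rm Adm}_J(\mu)$, and show $W_J \tau W_J \cap {\rm Adm}(\mu) \subseteq W_{\{c,g-c\}}\tau$. Since the generators $s_i$ of $W_J$ are exactly those with $i \notin J$, and $c, g-c \in J$ means $s_c, s_{g-c} \notin W_J$, the group $W_J$ is contained in the parabolic-type subgroup generated by $\{s_i : i \neq c, g-c\}$, which is the finite part of $W_{\{c,g-c\}}$. Conjugation by $\tau$ permutes this generating set (via $i \mapsto g-i$, which fixes the set $I \setminus \{c,g-c\}$), so $W_J \tau W_J \subseteq (W_{\{c,g-c\}} \cap W_{\text{fin}})\,\tau\,(W_{\{c,g-c\}} \cap W_{\text{fin}}) \subseteq W_{\{c,g-c\}}\tau$. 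Intersecting with ${\rm Adm}(\mu)$ only shrinks this, so the containment holds and $\mathcal A_{J,\overline\tau}$ is $c$-superspecial. One should also check this stratum is nonempty, which follows because $\tau \in {\rm Adm}(\mu)$ always, so $\overline\tau \in {\rm Adm}_J(\mu)$.

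I expect the main obstacle to be bookkeeping: making the identification of $W_{\{c,g-c\}}$ as the subgroup of $\widetilde W$ generated by $\{s_i : i \in I \setminus \{c,g-c\}\}$ (consistent with the paper's non-standard convention where $W_J$ is generated by $s_i$, $i \notin J$) completely explicit, and verifying that the length-zero element $\tau$ normalizes each relevant subgroup with the stated action $s_i \mapsto s_{g-i}$ on simple reflections. Once that dictionary is set up, both inclusions are short; the subtlety is purely that two different indexing conventions ($J$ vs. its complement, and $\tau$'s diagram automorphism) must be tracked carefully so that ``$c$ and $g-c$ in $J$'' is correctly matched with ``$s_c, s_{g-c}$ absent from the generators of $W_J$''.
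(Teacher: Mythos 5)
Your ``if'' direction is correct and is essentially the paper's argument: $c,g-c\in J$ forces $W_J\subseteq W_{\{c,g-c\}}$ (with the paper's convention that $W_J$ is generated by the $s_i$ with $i\notin J$), conjugation by $\tau$ preserves $W_{\{c,g-c\}}$ since $i\mapsto g-i$ fixes $I\setminus\{c,g-c\}$, hence $W_J\tau W_J\subseteq W_{\{c,g-c\}}\tau$ and the minimal stratum $\mathcal A_{J,\overline{\tau}}$ is $c$-superspecial. (A small quibble: $W_{\{c,g-c\}}$ contains $s_0$, so it is not literally ``the finite part'' of anything, though it is a finite group; this does not affect the argument.)

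The ``only if'' direction has a genuine gap. To contradict $c$-superspeciality of $\mathcal A_{J,x}$ you must produce an element of $W_JxW_J\cap{\rm Adm}(\mu)$ lying outside $W_{\{c,g-c\}}\tau$; an element of $W_JxW_J$ outside $W_{\{c,g-c\}}\tau$ is not enough. Your element $s_iv=s_iw\tau$ (with $i\in\{c,g-c\}$, $i\notin J$) is indeed in the double coset and outside $W_{\{c,g-c\}}\tau$, but you never verify that it is admissible, and for an arbitrary $v=w\tau$ with $w\in W_{\{c,g-c\}}$ this can fail: if $\ell(s_iv)>\ell(v)$ there is no reason for $s_iv$ to stay below a translation $t^{\lambda}$ with $\lambda$ in the $W$-orbit of $\mu$. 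This admissibility check is precisely the crux of the paper's proof, which handles it in two steps. First it reduces to the minimal stratum $x=\tau$, using that the union of all $c$-superspecial strata is closed (so if any stratum is $c$-superspecial, the minimal one is). Then it only needs the specific fact that $s_c\tau$ and $s_{g-c}\tau=\tau s_c$ lie in ${\rm Adm}(\mu)$, which it justifies by checking that $s\tau$ is permissible for every simple affine reflection $s$ (or by the alternative Bruhat-order argument it sketches). Your proof needs at least this second ingredient, and as written---working with an arbitrary superspecial $v$ rather than with $\tau$---it implicitly relies on a stronger statement that is false in general. Inserting the reduction to the minimal stratum and then proving $s_c\tau,\,s_{g-c}\tau\in{\rm Adm}(\mu)$ closes the gap and makes your argument coincide with the paper's.
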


\begin{proof}
If $c, g-c \in J$ then $W_J\tau W_J \subseteq W_{c, g-c}\tau$ which shows
that the minimal KR stratum in $\mathcal A_J$ is $c$-superspecial.
To show the converse, assume that $c\not\in J$ or $g-c\not\in J$. Because
the union of all $c$-superspecial strata is closed, it is enough to show
that the minimal stratum in $\mathcal A_J$ is not $c$-superspecial,
i.e., that
\[
W_J\tau W_J \cap {\rm Adm}(\mu) \not\subseteq W_{\{c, g-c\}}\tau.
\]
By assumption we have $s_c \tau, s_{g-c} \tau = \tau s_c \in W_J\tau W_J$.
The lemma follows, because for all simple reflections $s$ we have $s\tau
\in {\rm Adm}(\mu)$. This can be shown by checking that all these elements
are permissible. Another (much more roundabout) way to prove this is to say
that for every $c$, either $s_c\tau \le t^\mu$ or $s_{g-c}\tau\le t^\mu$,
since clearly $\mathcal A_{I,t^\mu}$ is not superspecial. The claim follows
because conjugating by $\tau$ preserves the Bruhat order, interchanges
$s_c$ and $s_{g-c}$ and maps $t^\mu$ to $t^{w_\emptyset\mu}$, where
$w_\emptyset$ is the $W$-component of $\tau \in \widetilde W = X_*(T)
\rtimes W$.
\end{proof}

In view of \eqref{inverse_image_piJI}, the lemma says that for
$x\in {\rm Adm}(\mu)$ and $c\not\in J$, we can find a point
$(A_j)_{j\in J}\in \mathcal A_{J, \overline{x}}$ which can be
extended to a chain $(A_j)_{j\in I}$ which does not lie in the
$c$-superspecial locus, i.e., where we do not have $A_c$,
$A_{g-c}$ superspecial and $A_c \rightarrow A_{g-c}^\vee$ the
Frobenius.

We now describe $c$-superspecial KR-strata in terms of
Deligne-Lusztig varieties. Suppose that $J$ is a subset of $I$
with $c,g-c \in J$ (so that $c$-superspecial strata exist in
$\mathcal A _J$). Let $\mathcal P _{\sigma'(J)} (G'_{c,k})$  be the variety
of parabolic subgroups of type $I\backslash \sigma'(J)$ in $G'_{c,k}$. Note
that it is not defined over $\mathbb{F}_p$ if $J$ is not Frobenius
invariant. There are coarse Deligne-Lusztig varieties
\[
    \CoarseDL{G'_{c,k}}{\sigma'(J)}{\overline{w}}{\sigma'} = \{ P \in
    \mathcal P _{\sigma'(J)}
    (G'_{c,k}) \, | \, \relpos{P}{\sigma' (P)} = \overline{w} \}
\]
for all double cosets $\overline{w} \in W_{\sigma'(J)} \backslash
W_{\{c,g-c\}} / W_J$.

\begin{thm}\label{parahoric_ssp_KR_as_DL}
Let $\mathcal A _{J,\overline{x}}$ be a $c$-superspecial stratum inside
$\mathcal A_J$. Write $x = w \tau$, so that $w\in W_{\{c,g-c\}}$.
There is an isomorphism
\[
\mathcal A_{J, \overline{x}} \cong \pi_{\{c, g-c\},
I}(\mathcal A_{I, \tau}) \times
\CoarseDL{G'_{c,k}}{\sigma'(J)}{\overline{w^{-1}}}{\sigma'}.
\]
\end{thm}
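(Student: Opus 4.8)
The plan is to descend the Iwahori-level description of Theorem~\ref{thm:KRStrata} along the projection $\pi_{J,I}\colon \mathcal A_I \to \mathcal A_J$. First I would use \eqref{inverse_image_piJI} together with the $c$-superspecial hypothesis: the preimage $\pi_{J,I}^{-1}(\mathcal A_{J,\overline{x}})$ is the disjoint union of the $\KRStratum{v\tau}$ over $v \in W_J w W_J \cap W_{\{c,g-c\}}$, and Theorem~\ref{thm:KRStrata} identifies each of these with $\pi_{\{c,g-c\},I}(\KRStratum{\tau}) \times \DLB{\overline{G}'_c}{v^{-1}}{\sigma'}$. Since these isomorphisms are all realized by the \emph{same} geometric recipe (send a chain to the flags of images of Hodge filtrations inside the fixed de Rham lattice-chain quotients, after the canonical trivialization coming from $0$-dimensionality of $\KRStratum{\tau}$), the union over $v$ of the Deligne--Lusztig pieces glues to the subset of $\Bor(\overline{G}'_c)$ where $\relpos{B}{\sigma'(B)} \in W_J w^{-1} W_J$, keeping the first factor $\pi_{\{c,g-c\},I}(\KRStratum{\tau})$ fixed. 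In other words, $\pi_{J,I}^{-1}(\mathcal A_{J,\overline{x}}) \cong \pi_{\{c,g-c\},I}(\KRStratum{\tau}) \times \{B \in \Bor(\overline{G}'_c) : \relpos{B}{\sigma'(B)} \in W_J\overline{w^{-1}}W_J\}$, compatibly with the $\pi_{J,I}$-action on the left and the action on the right that forgets the chain down to its $J$-part (equivalently, passes from a Borel to the parabolic of type $I \setminus \sigma'(J)$ it determines via the flag data).

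Next I would make precise that $\pi_{J,I}$, on the right-hand factor, is exactly the map $\Bor(\overline{G}'_c) = \mathcal P_\emptyset(G'_{c,k}) \to \mathcal P_{\sigma'(J)}(G'_{c,k})$ sending a Borel to the parabolic of type $I\setminus\sigma'(J)$ containing it. This is the content of unwinding which $s_i$'s survive when one remembers only the abelian varieties $A_j$ with $j \in J$: removing the step $A_{j} \to A_{j+1}$ for $j \notin J$ removes the corresponding flag inclusion, hence coarsens the Borel to a parabolic, and the indexing shift by $g$ (as in the proof of Theorem~\ref{thm:KR_EO_compatible}) together with the description of $\sigma'$ in \cite{GoertzYu2008} produces the twist by $\sigma'$, so the type is $I \setminus \sigma'(J)$ rather than $I \setminus J$. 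Under this identification the relative-position condition $\relpos{B}{\sigma'(B)} \in W_J\overline{w^{-1}}W_J$ on Borels descends precisely to $\relpos{P}{\sigma'(P)} = \overline{w^{-1}}$ as a double coset in $W_{\sigma'(J)}\backslash W_{\{c,g-c\}}/W_J$, which is the defining condition of the coarse Deligne--Lusztig variety $\CoarseDL{G'_{c,k}}{\sigma'(J)}{\overline{w^{-1}}}{\sigma'}$. Since $\mathcal A_{J,\overline{x}}$ is the quotient of $\pi_{J,I}^{-1}(\mathcal A_{J,\overline{x}})$ in the evident sense (it is the image, and $\pi_{J,I}$ restricted to the preimage is the forgetful map which on the product side only affects the second factor), taking the image gives the asserted isomorphism with $\pi_{\{c,g-c\},I}(\KRStratum{\tau}) \times \CoarseDL{G'_{c,k}}{\sigma'(J)}{\overline{w^{-1}}}{\sigma'}$.

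The step I expect to be the main obstacle is the bookkeeping in the second paragraph: checking carefully that the functor "remember only the $A_j$ for $j\in J$" corresponds, under the isomorphism of Theorem~\ref{thm:KRStrata}, to the coarsening $\Bor(\overline{G}'_c) \to \mathcal P_{\sigma'(J)}(G'_{c,k})$ with the correct $\sigma'$-twist in the type, and that the fibral relative-position condition passes to the claimed double coset rather than to something off by an application of $\sigma'$ or by the longest element. This requires re-reading the proof of Proposition~6.1 in \cite{GoertzYu2008} to see exactly which lattice-chain quotient each $A_j$-flag lives in after the shift by $g$, and matching it against the decomposition \eqref{Weyl_gp_as_product} and the formula for $\sigma'$; it is conceptually routine but notation-heavy. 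Once that compatibility is pinned down, everything else is formal: disjointness and openness/closedness of the pieces is inherited from the Iwahori case, and the gluing is immediate because all the relevant isomorphisms are given by one uniform geometric construction.
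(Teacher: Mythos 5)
The paper's own proof is a one-line reduction: it constructs the isomorphism directly at level $J$ by running the argument of \cite{GoertzYu2008}, \S 6 verbatim on the chain $(A_j)_{j\in J}$ (the flags of images of Hodge filtrations now land in the partial flag variety $\mathcal P_{\sigma'(J)}(G'_{c,k})$ rather than in $\Bor(\overline{G}'_c)$, and the inverse is built by Dieudonn\'e theory exactly as in the Iwahori case). Your route is genuinely different: you descend the Iwahori statement along $\pi_{J,I}$ using \eqref{inverse_image_piJI}. The combinatorial part of your descent is sound and worth spelling out: since $c,g-c\in J$ one has $W_J, W_{\sigma'(J)}\subseteq W_{\{c,g-c\}}$ and $W_{\{c,g-c\}}\tau\subseteq{\rm Adm}(\mu)$, so $W_Jw\tau W_J\cap{\rm Adm}(\mu)=(W_JwW_{\sigma'(J)})\tau$, and the inverses $v^{-1}$ sweep out exactly the double coset $W_{\sigma'(J)}\overline{w^{-1}}W_J$ indexing the Borels lying over the coarse Deligne--Lusztig variety. (Your first paragraph writes this coset as $W_Jw^{-1}W_J$; the $\tau$-conjugation produces the asymmetric coset $W_{\sigma'(J)}\overline{w^{-1}}W_J$, which you do use correctly in the second paragraph.)

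The genuine soft spot is the last step, ``taking the image gives the asserted isomorphism.'' Having a commutative square in which the top arrow is an isomorphism and both vertical arrows are surjective does not by itself yield that the induced bottom arrow is an isomorphism of schemes: you must show (i) that the composite $\pi_{J,I}^{-1}(\mathcal A_{J,\overline x})\to \pi_{\{c,g-c\},I}(\KRStratum{\tau})\times\CoarseDL{G'_{c,k}}{\sigma'(J)}{\overline{w^{-1}}}{\sigma'}$ actually factors through $\pi_{J,I}$ as a morphism of schemes, and (ii) that the resulting map is an isomorphism and not merely a proper bijection on points. The cleanest repair is the paper's: define the map on $\mathcal A_{J,\overline x}$ itself by the same geometric recipe applied only to the $A_j$ with $j\in J$ (this gives (i) for free), and obtain (ii) by constructing the inverse directly from a parabolic of type $I\setminus\sigma'(J)$ via Dieudonn\'e theory, as in the Iwahori case. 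With that substitution your argument closes; as written, the descent of the isomorphism is asserted rather than proved, and it is exactly the point where the direct construction is doing real work.
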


\begin{proof}
Taking into account the remarks above, the proof is the same as in the
Iwahori case, see \cite{GoertzYu2008}, section 6.
\end{proof}

It is evident that every superspecial stratum is supersingular. We
will show below that the converse is true, as well.
We first prove a connectedness result in the parahoric
case, analogous to \cite{GoertzYu2008-2}, Thm.~7.3.

\begin{prop}
If a KR-stratum is not superspecial, then it is irreducible.
\end{prop}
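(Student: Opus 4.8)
The plan is to reduce the statement to the irreducibility of the corresponding KR stratum in the Iwahori moduli space $\mathcal A_I$, and then invoke the analog of \cite{GoertzYu2008-2}, Thm.~7.3 cited just above. First I would observe that by \eqref{inverse_image_piJI}, if $\mathcal A_{J,\overline x}$ is a KR stratum with $\overline x$ the image of $x\in \mathrm{Adm}(\mu)$, then $\pi_{J,I}\colon \mathcal A_I\to\mathcal A_J$ restricts to a finite surjective morphism
\[
\coprod_{v\in W_JxW_J\cap\mathrm{Adm}(\mu)}\mathcal A_{I,v}\;\longrightarrow\;\mathcal A_{J,\overline x},
\]
so that $\mathcal A_{J,\overline x}$ is a continuous image of this disjoint union. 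The key point is that since $\mathcal A_{J,\overline x}$ is not superspecial, none of the strata $\mathcal A_{I,v}$ appearing here is superspecial either: indeed $\mathcal A_{I,v}$ superspecial would mean $v\in W_{\{c,g-c\}}\tau$ for some $c$, and then (using that conjugation by $\tau$ interchanges $s_c,s_{g-c}$ and that $W_JvW_J = W_JxW_J$) one checks that in fact $W_JxW_J\cap\mathrm{Adm}(\mu)$ would lie in $W_{\{c,g-c\}}\tau$, contradicting non-superspeciality. Hence each $\mathcal A_{I,v}$ is irreducible by the Iwahori result \cite{GoertzYu2008-2}, Thm.~7.3.

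Next I would upgrade ``irreducible image of a disjoint union of irreducibles'' to ``irreducible''. This is not automatic, since a finite surjective map from a disjoint union of irreducibles could a priori have disconnected image only if the map were not connecting the pieces — so the real content is a connectedness statement for $\mathcal A_{J,\overline x}$. I would argue that $\pi_{J,I}$ is \emph{proper} (it is finite, being a morphism of moduli spaces of chains obtained by forgetting part of the chain), hence its image is closed, and that the fibers of $\pi_{J,I}$ over a point of $\mathcal A_{J,\overline x}$ are connected — or at least that they link the components $\mathcal A_{I,v}$ together. Concretely, a point of $\mathcal A_{J,\overline x}$ is a chain $(A_j)_{j\in J}$, and its preimage consists of all ways of filling in the missing isogenies $A_j\to A_{j'}$ compatibly with polarizations; this space of completions is itself connected (it is cut out inside a product of Grassmannian-type varieties by the chain conditions, and for a non-superspecial stratum there is enough room for it to be connected — this is exactly the geometry controlled by the coarse-vs-fine Deligne–Lusztig comparison of Section~\ref{sec3} and the preceding proposition). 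Combining: $\mathcal A_{J,\overline x}$ is the image of a connected space (the union $\coprod_v \mathcal A_{I,v}$ is connected because the $\mathcal A_{I,v}$ are glued along connected fibers of $\pi_{J,I}$), hence connected; being also a finite union of irreducible closed subsets that all have the same dimension (they are all isomorphic, by the transitivity of the $W_J$-action permuting the $v$'s, or more precisely because $\pi_{J,I}$ is finite flat of constant degree over the stratum) forces it to be irreducible.

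Alternatively, and perhaps more cleanly, I would bypass the connectedness-of-fibers discussion by using the explicit description available in the superspecial-adjacent literature: the closure relations among KR strata in $\mathcal A_I$ are given by the Bruhat order on $\mathrm{Adm}(\mu)$, and the set $W_JxW_J\cap\mathrm{Adm}(\mu)$, while possibly having several elements, has a unique maximal element $x_{\max}$ for the Bruhat order (this is a standard fact about $W_J$-double cosets intersected with the admissible set). Then $\mathcal A_{I,x_{\max}}$ is open and dense in $\pi_{J,I}^{-1}(\mathcal A_{J,\overline x})$, so its image under the continuous open-ish map $\pi_{J,I}$ is dense in $\mathcal A_{J,\overline x}$; since $\mathcal A_{I,x_{\max}}$ is irreducible by \cite{GoertzYu2008-2}, Thm.~7.3 (it is not superspecial, as above), its image is irreducible, and a space with a dense irreducible subset is irreducible.

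The main obstacle I anticipate is justifying that the preimage stratum $\mathcal A_{I,x_{\max}}$ (or the fibers of $\pi_{J,I}$) behaves well enough — specifically, verifying that $\pi_{J,I}$ restricted to strata is open onto its image, or equivalently that $\mathcal A_{I,x_{\max}}$ surjects onto a dense subset of $\mathcal A_{J,\overline x}$ rather than a lower-dimensional piece. This should follow from the compatibility of the KR stratifications with $\pi_{J,I}$ recorded in \eqref{inverse_image_piJI} together with dimension count ($\dim\mathcal A_{I,x_{\max}} = \dim\mathcal A_{J,\overline x}$ since $\pi_{J,I}$ is finite on the stratum $\mathcal A_{I,x_{\max}}$, the latter because $x_{\max}$ is the unique maximal element, making the generic fiber of $\pi_{J,I}$ over $\mathcal A_{J,\overline x}$ zero-dimensional), but making this rigorous is the one place where I would need to be careful rather than wave at ``analogous to the Iwahori case.''
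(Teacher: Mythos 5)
Both routes you propose contain steps that are false as stated, and they miss the two ingredients the paper actually relies on. In the first route, the claim that ``none of the strata $\mathcal A_{I,v}$ appearing here is superspecial'' does not follow from non-superspeciality of $\mathcal A_{J,\overline x}$: by definition, $\mathcal A_{J,\overline x}$ fails to be $c$-superspecial when \emph{not all} of $W_JxW_J\cap{\rm Adm}(\mu)$ lies in $W_{\{c,g-c\}}\tau$, which is perfectly compatible with \emph{some} $v$ in the intersection being superspecial. A concrete counterexample is $x=\tau$ with $J$ containing no pair $\{c,g-c\}$ (e.g.\ $g=3$, $J=\{1\}$): then $\mathcal A_{J,\overline\tau}$ is not superspecial by the lemma in section \ref{sec4}, yet $\mathcal A_{I,\tau}$ sits in its preimage and is superspecial (indeed a finite set of points). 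So the pieces upstairs need not be irreducible, and the whole weight of the argument falls on the connectedness of $\mathcal A_{J,\overline x}$ --- exactly the point you acknowledge but only wave at. The fibers of $\pi_{J,I}$ over strata are not connected in general (over superspecial loci they are disjoint unions of Deligne--Lusztig varieties), so ``the fibers link the components'' is not available for free. Moreover, even granting connectedness, your final step --- that a connected, equidimensional finite union of irreducible closed subsets is irreducible --- is false (two lines meeting at a point); the paper instead uses that $\overline{\mathcal A_{J,\overline x}}$ is \emph{normal}, so that connected implies irreducible. In the second route, the ``standard fact'' that $W_JxW_J\cap{\rm Adm}(\mu)$ has a unique Bruhat-maximal element is also false: ${\rm Adm}(\mu)$ has $|W\mu|$ pairwise incomparable maximal elements $t^\lambda$, and a single double coset can contain several of them (e.g.\ $t^{u\lambda}=ut^\lambda u^{-1}\in W_Jt^\lambda W_J$ for $u\in W_J$), so no single stratum $\mathcal A_{I,x_{\max}}$ is dense in $\pi_{J,I}^{-1}(\mathcal A_{J,\overline x})$. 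The paper's own remark after Theorem \ref{ssp_iff_ssi} warns precisely that the intersection of a double coset with the admissible set is hard to control.

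The paper's proof goes differently: from non-superspeciality it extracts, for each $c$, an element of $W_JxW_J\cap{\rm Adm}(\mu)$ lying above $s_c\tau$ or $s_{g-c}\tau$, and then invokes \cite{GoertzYu2008-2}, Thm.~7.2, to conclude that the closure of the union of the \emph{one-dimensional} KR strata contained in $\pi_{J,I}^{-1}(\mathcal A_{J,\overline x})$ is connected. Combined with the fact that every connected component of every Iwahori KR stratum has a point of the minimal stratum in its closure (\cite{GoertzYu2008-2}, Thm.~6.2), this ``one-skeleton'' links all connected components of $\pi_{J,I}^{-1}(\overline{\mathcal A_{J,\overline x}})$, whence the closure downstairs is connected, and normality finishes the job. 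If you want to salvage your approach, you would need to import these two results (connectivity of the one-skeleton and the minimal-point property) rather than argue via irreducibility of the individual $\mathcal A_{I,v}$ or via a maximal element of the double coset.
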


The converse holds if the level structure is large enough.

\begin{proof}
Let $x\in {\rm Adm}(\mu)$ and suppose $\mathcal A_{J,\overline{x}}$ is not
superspecial, i.~e.
\[
W_J x W_J \cap {\rm Adm}(\mu) \not\subseteq W_{\{c,g-c\}}\tau,
\quad\text{for all } c \in \{ 0, \dots, [g/2] \}.
\]
Then there exists for each $c \in \{ 0, \dots, [g/2] \}$ an
element in $W_J x W_J \cap {\rm Adm}(\mu)$ larger than $s_c\tau$ or
than $s_{g-c}\tau$.

Therefore by \cite{GoertzYu2008-2} Thm.~7.2 the closure of the union of all
$1$-dimensional strata in $\pi_{J,I}^{-1}(\mathcal A_{J, \overline{x}})$,
\[
\overline{\bigcup_{v \in W_JxW_J \cap {\rm Adm}(\mu)} \bigcup_{\gfrac{s \le
v\tau^{-1}}{\ell(s)=1}}  \mathcal A_{I, s}}
\]
is connected. Now every connected component of
\[
\pi_{J,I}^{-1}(\overline{\mathcal A_{J, \overline{x}}})
= \overline{\bigcup_{v \in W_JxW_J \cap {\rm Adm}(\mu)} \mathcal A_{I, v}},
\]
meets the previous set (because every connected component of a KR stratum
in $\mathcal A_I$ has a point of the minimal stratum in its closure,
\cite{GoertzYu2008-2}, Thm.~6.2).
This implies that $\pi_{J,I}^{-1}(\overline{\mathcal A_{J, \overline{x}}})$
is connected, but then its image $\overline{\mathcal A_{J,
\overline{x}}}$ is connected, too. This closure is normal, so connectedness
implies irreducibility, and the proposition follows.
\end{proof}

\begin{thm} \label{ssp_iff_ssi}
A KR-stratum is supersingular if and only if it is superspecial.
\end{thm}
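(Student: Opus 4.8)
The plan is to prove the two directions separately, the easy one being "superspecial $\Rightarrow$ supersingular." A $c$-superspecial stratum $\mathcal A_{J, \overline x}$ satisfies $W_J x W_J \cap \mathrm{Adm}(\mu) \subseteq W_{\{c,g-c\}}\tau$, so by \eqref{inverse_image_piJI} its preimage under $\pi_{J,I}$ is a union of superspecial KR strata in $\mathcal A_I$. Each of those is superspecial, hence supersingular (a superspecial abelian variety is supersingular); and since supersingularity of a point of $\mathcal A_J$ can be checked on any lift to $\mathcal A_I$ — the abelian varieties in a chain are all isogenous — the stratum $\mathcal A_{J, \overline x}$ itself lies in $\mathcal S_J$. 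So the content is the converse.

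For "supersingular $\Rightarrow$ superspecial," I would argue by contraposition: suppose $\mathcal A_{J, \overline x}$ is \emph{not} superspecial and show it contains a non-supersingular point. By the definition of superspecial, for every $c \in \{0,\dots,[g/2]\}$ we have $W_J x W_J \cap \mathrm{Adm}(\mu) \not\subseteq W_{\{c,g-c\}}\tau$. Combine this with the preimage formula \eqref{inverse_image_piJI}: it suffices to exhibit a single $v \in W_J x W_J \cap \mathrm{Adm}(\mu)$ such that the Iwahori stratum $\mathcal A_{I,v}$ is not supersingular, because then $\pi_{J,I}(\mathcal A_{I,v}) \subseteq \mathcal A_{J,\overline x}$ gives non-supersingular points of the $J$-stratum. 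Now the KR stratum $\mathcal A_{I,v}$ in the Iwahori case is supersingular if and only if $v \in W_{\{c,g-c\}}\tau$ for some $c$ — this is exactly the Iwahori version of the theorem, which is proved in \cite{GoertzYu2008}, \cite{GoertzYu2008-2}. So I must find $v \in W_J x W_J \cap \mathrm{Adm}(\mu)$ lying outside $\bigcup_c W_{\{c,g-c\}}\tau$.

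The key combinatorial step is therefore: if $W_J x W_J \cap \mathrm{Adm}(\mu)$ is not contained in $W_{\{c,g-c\}}\tau$ for \emph{any single} $c$, then it is not contained in the \emph{union} $\bigcup_c W_{\{c,g-c\}}\tau$ either. Equivalently, some element of the double coset admissible set simultaneously avoids all the sets $W_{\{c,g-c\}}\tau$. I would deduce this by a maximality argument: take $v$ to be a maximal element (in the Bruhat order on $\widetilde W$) of $W_J x W_J \cap \mathrm{Adm}(\mu)$. For each $c$, the hypothesis gives some element of the double coset admissible set strictly above $s_c\tau$ or above $s_{g-c}\tau$; since $v$ is maximal, $v$ itself lies above one of $s_c\tau$, $s_{g-c}\tau$. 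But every element of $W_{\{c,g-c\}}\tau$, being of the form $w\tau$ with $w \in W_{\{c,g-c\}} = \langle s_i : i\neq c, g-c\rangle$, is \emph{incomparable} to $s_c\tau$ and to $s_{g-c}\tau$ in the relevant sense — more precisely, one uses that $\tau$ is central for the Bruhat order up to the shift $w \mapsto w\tau$, so $v \ge s_c\tau$ forces the $W$-part of $v\tau^{-1}$ to involve $s_c$, hence $v\tau^{-1} \notin W_{\{c,g-c\}}$. Running over all $c$, no single relation $v \in W_{\{c,g-c\}}\tau$ can hold, so $v$ avoids the whole union, and $\mathcal A_{I,v}$ is not supersingular. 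Actually, since the proposition just before the theorem already handles the non-superspecial (hence irreducible) case, an alternative I would try first is to invoke that irreducibility together with the purity/dimension theory: a supersingular stratum in $\mathcal A_I$ is zero-dimensional, so its image under $\pi_{J,I}$ is zero-dimensional, but a non-superspecial KR stratum contains $1$-dimensional Iwahori strata in its preimage-closure (as used in the proof of the preceding proposition via \cite{GoertzYu2008-2}, Thm.~7.2), a contradiction.

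The main obstacle is the combinatorial incomparability claim — that membership in some $W_{\{c,g-c\}}\tau$ is genuinely obstructed by being Bruhat-above $s_c\tau$ or $s_{g-c}\tau$ — and making the maximality argument interact cleanly with the double-coset structure $W_J x W_J$ rather than a single element. I expect this to go through because $\tau$ has length zero and conjugation by $\tau$ is a Bruhat-order automorphism interchanging $s_c$ and $s_{g-c}$ (a fact already invoked in the excerpt's earlier lemma), so the shift by $\tau$ is harmless; the real work is bookkeeping with which simple reflections appear in a reduced word, exactly the kind of argument run in the proof that $\FinalElts{g}\cap W_{\{c,g-c\}} = \FinalElts{c}$. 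Everything else reduces to citing the Iwahori case and the elementary fact that supersingularity passes between isogenous abelian varieties in a chain.
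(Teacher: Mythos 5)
Your easy direction is fine, but the hard direction has a genuine gap at exactly the point the paper itself flags as the difficulty. Your key combinatorial claim is: if $W_J x W_J \cap \mathrm{Adm}(\mu)$ is not contained in $W_{\{c,g-c\}}\tau$ for any single $c$, then some single element $v$ avoids all the $W_{\{c,g-c\}}\tau$ at once. Your maximality argument for this conflates ``maximal'' with ``maximum.'' The admissible set is a union of Bruhat intervals $[\tau, t^{\lambda}]$ for $\lambda$ in the $W$-orbit of $\mu$, not a single interval, so $W_J x W_J \cap \mathrm{Adm}(\mu)$ will in general have several maximal elements; the witness $u_c$ lying above $s_c\tau$ or $s_{g-c}\tau$ need not be comparable to your chosen maximal $v$, so you cannot conclude that $v$ itself lies above $s_c\tau$ or $s_{g-c}\tau$ for every $c$. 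The witnesses for different $c$ may sit under different maximal elements. This is precisely the statement the authors single out in the Remark following the theorem: ``It seems hard to prove this statement combinatorially because it is not easy to understand what happens when one intersects the double coset $W_JxW_J$ with the admissible set.'' Your fallback argument also fails, because its premise is false: supersingular KR strata in $\mathcal A_I$ are \emph{not} zero-dimensional --- by Theorem \ref{thm:KRStrata} the stratum $\mathcal A_{I,w\tau}$ with $w\in W_{\{c,g-c\}}$ is a union of Deligne--Lusztig varieties of dimension $\ell(w)$, which is positive as soon as $w\neq 1$.

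The paper's actual proof sidesteps the combinatorics entirely and is very short: if $\mathcal A_{J,\overline{x}}\subseteq\mathcal S_J$, its image in $\mathcal A_g$ is a union of \emph{supersingular} EO strata; by Theorem \ref{thm:EOStrata} each of these is a disjoint union over the finite set $\Lambda_{g,c}$, which has more than one element once the prime-to-$p$ level is large enough, so the image --- and hence $\mathcal A_{J,\overline{x}}$ itself --- is reducible. The preceding proposition (non-superspecial $\Rightarrow$ irreducible) then gives superspeciality by contraposition. You correctly sensed that the irreducibility proposition is the engine of the proof, but the missing idea is to produce reducibility from the class-number-type disconnectedness of supersingular EO strata, rather than to hunt for a non-supersingular Iwahori stratum in the preimage.
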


\begin{proof}
Suppose that $\mathcal A_{J, \overline{x}}$ is contained in
$\mathcal S_J$. Its image in $\mathcal A_g$ is a union of
\emph{supersingular} EO strata. We may assume that the level
structure outside $p$ is large enough, so that this union, and
hence $\mathcal A_{J, \overline{x}}$, is reducible. By the
previous proposition, this implies the desired statement.
\end{proof}

\begin{remark}
Let us discuss the combinatorial meaning of the theorem. Start with $x\in
{\rm Adm}(\mu)$, such that $\mathcal A_{J, x} \subseteq \mathcal S_J$. The
latter condition is equivalent to
\[
\pi_{J,I}^{-1}(\mathcal A_{J, \overline{x}}) \subseteq \mathcal
S_J,
\]
or in other words to
\[
W_JxW_J \cap {\rm Adm}(\mu) \subseteq \bigcup_{c=0}^{[g/2]} W_{\{c,g-c\}}.
\]
By the theorem there exists $c\in J$ such that
\[
W_JxW_J \cap {\rm Adm}(\mu) \subseteq  W_{\{c,g-c\}}
\]
It seems hard to prove this statement combinatorially because it is not
easy to understand what happens when one intersects the double coset
$W_JxW_J$ with the admissible set.
\end{remark}


\section{EO strata as parahoric KR strata}
\label{sec5}

In this section, we fix a final element $w\in W_{g, {\rm final}}$, and denote by
$J\subseteq  I$ the type of the corresponding canonical filtration. Since
the canonical filtration is a flag in $\mathbb H$ (rather than a lattice
chain), we have $0\in J$ ``automatically''. Furthermore, since the
canonical filtration always contains a maximal totally isotropic subspace,
we also have $g\in J$. Furthermore, $J$ is $\sigma$-stable.

\begin{example}
On the EO-stratum of abelian varieties with $a$-number 1 and
$p$-rank $f$, the type of the canonical filtration is
$\{0,f,f+1,\dots,2g-f-1,2g-f,2g\}$, see \cite{ekedahl-vdgeer}
example 3.4. On the stratum of abelian varieties with $a$-number
$a$ and $p$-rank $g-a$ the canonical type is $\{0,g-a,g,g+a,2g\}$.
\end{example}

\begin{example}
In the case $g=2$, we have four final elements, corresponding to the
superspecial locus (${\rm id}$), the supersingular locus without the superspecial
points ($s_2$), the $p$-rank $1$ locus  ($s_1s_2$) and the $p$-rank $2$
locus ($s_2s_1s_2$). The corresponding canonical filtrations are given by
\[
J = \{ 0, 2 \}, \quad
 \{ 0, 1, 2 \}, \quad
 \{ 0, 1, 2 \}, \quad
 \{ 0, 2 \},
\]
respectively.
\end{example}

In the next theorem, we view $\mathcal A_J$ as the moduli
space of principally polarized abelian schemes $A$ together with a flag
$\scrF_\bullet$ of finite
flat subgroup schemes of $A[p]$ of appropriate ranks, as determined by $J$.

From the flag $\scrF_\bullet$ we can make a conjugate flag $\scrF_
\bullet ^c$ by
\[
\scrF_{g+i} ^c = V^{-1}(\scrF_i ^{(p)}) \quad \textrm{and} \quad
\scrF_{g-i} ^c = (\scrF_{g-i} ^c)^\bot \quad (i=1,\dots,g).
\]
The stratum $\mathcal A_{J, \overline{w\tau}}$ consists of all
points $(A, (\scrF_j)_{j\in J})$ such that $\scrF_\bullet$ and
$\scrF_ \bullet ^c$ (or rather their Dieudonn\'e modules) are in
relative position $w$.

\begin{thm}
\label{EO_as_parahoric_KR} The natural map $\pi_J \colon \mathcal
A_J \to \mathcal A _g$ restricts to an isomorphism $\mathcal A_{J,
\overline{w\tau}} \to \EOStratum{w}$. Its inverse maps a point
$A\in \EOStratum{w}$ to $(A, \scrF_\bullet^{\rm can} \subset A[p])$, where
$\scrF_\bullet^{\rm can}$ denotes the canonical filtration of
$A[p]$.
\end{thm}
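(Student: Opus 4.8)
The plan is to show that the map $A \mapsto (A, \scrF_\bullet^{\mathrm{can}})$ is a well-defined morphism $\EOStratum{w} \to \mathcal{A}_{J, \overline{w\tau}}$ inverse to $\pi_J$. First I would verify that it lands in $\mathcal{A}_J$: by the very definition of $J$ as the type of the canonical filtration of the generic (equivalently, every) point of $\EOStratum{w}$, the canonical filtration $\scrF_\bullet^{\mathrm{can}} \subset A[p]$ is a flag of finite flat subgroup schemes of the ranks prescribed by $J$, and it varies in a flat family over the stratum because the canonical filtration is defined functorially (it is built from iterated images and kernels of $F$ and $V$, stabilized over the constant $a$-number and $F$-$V$-semimodule type that characterize the EO stratum). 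Since $0, g \in J$ and $J$ is $\sigma$-stable — facts already recorded in the excerpt — the image is an honest point of $\mathcal{A}_J$ with a principal polarization at the ends of the chain.

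Next I would identify which KR stratum the image lands in. The key computation is the relative position of the Dieudonn\'e module of $\scrF_\bullet^{\mathrm{can}}$ and that of its conjugate flag $\scrF_\bullet^c$. For this I would invoke the translation between the EO-theoretic picture — the final element $w$ records exactly how $F$ and $V$ act on the canonical filtration of $M(A[p])$ — and the lattice-chain picture defining $\mathrm{Adm}_J(\mu)$. Concretely, the conjugate flag $\scrF_{g+i}^c = V^{-1}(\scrF_i^{(p)})$, $\scrF_{g-i}^c = (\scrF_{g-i}^c)^\perp$ is, on Dieudonn\'e modules, precisely the "other half" of the lattice chain obtained from the standard one by duality and Frobenius, and the statement that the two flags are in relative position $w$ is exactly the statement that the point lies over the KR stratum indexed by $w\tau$ (here $\tau$ is the minimal admissible element, encoding the shift built into passing from $\scrF_\bullet$ to $\scrF_\bullet^c$). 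I would make this precise by comparing with the definition given just before the theorem and with the indexation conventions recalled in section~\ref{sec2}; in the superspecial/supersingular case this is literally the computation behind Theorem~\ref{thm:KRStrata}, and in general it is the standard dictionary between final elements and admissible elements (as in \cite{ekedahl-vdgeer}, \cite{GoertzYu2008-2}).

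Then I would check bijectivity on points. Surjectivity of $\mathcal{A}_{J,\overline{w\tau}} \to \EOStratum{w}$: given $(A, \scrF_\bullet) \in \mathcal{A}_{J,\overline{w\tau}}$, the relative-position condition forces $\scrF_\bullet$ to be compatible with the canonical filtration; since $J$ is exactly the canonical type and the relative position is $w$, a length/dimension count shows $\scrF_\bullet$ must coincide with $\scrF_\bullet^{\mathrm{can}}$, so $A \in \EOStratum{w}$ and the fiber is a single point. Injectivity: the canonical filtration is intrinsic to $A$, so two points of $\EOStratum{w}$ with the same image agree. This shows $\pi_J$ restricted to $\mathcal{A}_{J,\overline{w\tau}}$ is a bijective morphism with the stated set-theoretic inverse. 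Finally, to upgrade to an isomorphism of schemes I would note that both sides are smooth (EO strata are smooth, and KR strata are smooth over the base), so a bijective morphism between smooth schemes that is, e.g., the restriction of the \'etale cover $\KRStratum{w\tau} \to \EOStratum{w}$ composed appropriately — more directly, a bijective morphism with a morphism as inverse (the assignment $A \mapsto (A,\scrF_\bullet^{\mathrm{can}})$ is itself a morphism by functoriality of the canonical filtration) — is an isomorphism.

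The main obstacle I expect is the second step: pinning down that "the Dieudonn\'e modules of $\scrF_\bullet$ and $\scrF_\bullet^c$ are in relative position $w$" is genuinely the same condition as "$(A,\scrF_\bullet) \in \mathcal{A}_{J,\overline{w\tau}}$", i.e. matching the EO-combinatorics (final elements, $F$-$V$-stable flags) with the KR-combinatorics (the double coset $W_J w\tau W_J \cap \mathrm{Adm}(\mu)$ and the minimal element $\tau$). This requires carefully tracking the index shift by $g$ and the duality/Frobenius twist in the definition of $\scrF_\bullet^c$, and checking it is compatible with the identifications used in Theorems~\ref{thm:KRStrata} and~\ref{thm:EOStrata}. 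Everything else — well-definedness, flatness of the family, the point-counting, and the smoothness argument for promoting a bijection to an isomorphism — is routine given the results already cited in the excerpt.
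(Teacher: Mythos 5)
Your overall route is the same as the paper's: use the global construction of the canonical filtration over $\EOStratum{w}$ to obtain a section $s\colon \EOStratum{w}\to\mathcal A_J$ of $\pi_J$ landing in $\mathcal A_{J,\overline{w\tau}}$, and then show that every point of that KR stratum carries the canonical filtration as its flag. The one step that is genuinely under-justified is your surjectivity argument: you assert that ``the relative-position condition forces $\scrF_\bullet$ to be compatible with the canonical filtration'' and then invoke a ``length/dimension count''. A dimension count by itself cannot identify $\scrF_\bullet$ with $\scrF_\bullet^{\rm can}$; what is actually needed is that $\scrF_\bullet$ is stable under $V$ (and under $\perp$) in the pattern prescribed by the final type $\nu$ of $w$, i.e.\ $V(\scrF_j)=\scrF_{\nu(j)}$, since the canonical filtration is characterized as the coarsest flag with this stability, and only then does the comparison of types (both flags have type $J$) force equality. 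The paper extracts this $V$-stability by a short indirect argument that you should adopt: the relative position of $V(\scrF_\bullet)$ and $\scrF_\bullet$ is constant on the KR stratum $\mathcal A_{J,\overline{w\tau}}$, and on the nonempty image of the section $s$ one has $V(\scrF_j)=\scrF_{\nu(j)}$; hence this identity holds at every $k$-point of the stratum, which gives $\scrF_\bullet=\scrF_\bullet^{\rm can}$ everywhere. This also spares you the delicate direct translation between the EO combinatorics and the double coset $W_Jw\tau W_J\cap{\rm Adm}(\mu)$ that you flag as the main obstacle: the only input is that the relative position is constant along a KR stratum and is known at one point. With that step repaired, the remainder of your outline (intrinsic-ness of the canonical filtration for injectivity, and the observation that the inverse is itself a morphism) goes through.
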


\begin{proof}
Because over $\EOStratum{w}$ the canonical filtration can be constructed
globally (see \cite{oort01}, Prop.~(3.2)) we have a section $s
\colon  \EOStratum{w} \to \mathcal A_J$ of $\pi_J$. By definition
its image is in $A_{J, \overline{w\tau}}$.

To finish the proof, we show that for each point $(A,
(\scrF_j)_{j\in J}) \in \mathcal A_{J, \overline{w\tau}}(k)$,
$(\scrF_j)$ is the canonical filtration of $A[p]$. Let $\nu$ be
the final type of $w$.  We know that for all points in the image
of $s$ we have $V(\scrF_j) = \scrF_{\nu(j)}$. But then this must
hold for all points in $\mathcal A_{J, \overline{w\tau}}(k)$,
since the relative position of $V(\scrF_\bullet)$ and
$\scrF_\bullet$ is constant on $\mathcal A_{J,
\overline{w\tau}}(k)$. This implies that $\scrF_\bullet$ is
the canonical filtration.
\end{proof}

There is a commutative diagram
\[
\xymatrix{
    \mathcal A_{I, w\tau} \ar[rr]^{\pi_{J,I}} \ar[rd]_{\pi} && \mathcal A_{J,
    \overline{w\tau}} \ar[dl]^\cong \\
    & \EOStratum{w} &
}
\]
So for $w$ final and $J$ as above the horizontal map is surjective.
Since all KR strata in
$\pi_{J, I}^{-1}(\mathcal A_{J, \overline{w\tau}})$ map
(necessarily surjectively) to $\EOStratum{w}$, $w\tau$ is the unique
element of minimal length in $W_Jw\tau W_J$. So we are in a very
special situation.

\begin{remark}
In case $\EOStratum{w}$ is supersingular, i.e., contained in $\mathcal
S_g$, we have theorems \ref{thm:EOStrata} and \ref{parahoric_ssp_KR_as_DL}
at our disposal, and the isomorphism above corresponds to the
identification of fine Deligne-Lusztig varieties with coarse
Deligne-Lusztig varieties for a different parabolic subgroup, see
\cite{Hoeve2008} Cor.~2.7.  Also, from \cite{GoertzYu2008} theorem 6.3 we
know that the fibres of $\pi$ and of the horizontal map in the diagram are
Deligne-Lusztig varieties.
\end{remark}


\end{document}